\documentclass[english]{article}
\usepackage[T1]{fontenc}
\usepackage[latin9]{inputenc}
\setlength{\parindent}{0bp}
\usepackage{textcomp}
\usepackage{amsthm}
\usepackage{amstext}
\usepackage{amssymb}

\makeatletter

\providecommand{\tabularnewline}{\\}

\newcommand{\lyxaddress}[1]{
\par {\raggedright #1
\vspace{1.4em}
\noindent\par}
}
\theoremstyle{plain}
\newtheorem{thm}{\protect\theoremname}
\theoremstyle{plain}
\newtheorem{lem}[thm]{\protect\lemmaname}
\ifx\proof\undefined
\newenvironment{proof}[1][\protect\proofname]{\par
\normalfont\topsep6\p@\@plus6\p@\relax
\trivlist
\itemindent\parindent
\item[\hskip\labelsep
\scshape
#1]\ignorespaces
}{%
\endtrivlist\@endpefalse
}
\providecommand{\proofname}{Proof}
\fi
\theoremstyle{definition}
\newtheorem{example}[thm]{\protect\examplename}
\theoremstyle{remark}
\newtheorem{rem}[thm]{\protect\remarkname}



\makeatother

\usepackage{babel}
\providecommand{\examplename}{Example}
\providecommand{\lemmaname}{Lemma}
\providecommand{\remarkname}{Remark}
\providecommand{\theoremname}{Theorem}

\begin{document}

\title{Congruence conditions on the number of terms in sums of consecutive
squared integers equal to squared integers }

\author{Vladimir Pletser}

\maketitle

\lyxaddress{European Space Research and Technology Centre, ESA-ESTEC P.O. Box
299, NL-2200 AG Noordwijk, The Netherlands; E-mail: Vladimir.Pletser@esa.int}
\begin{abstract}
\noindent Considering the problem of finding all the integer solutions
of the sum of $M$ consecutive integer squares starting at $a^{2}$
being equal to a squared integer $s^{2}$, it is shown that this problem
has no solutions if $M\equiv3,5,6,7,8$ or $10\left(mod\,12\right)$
and has integer solutions if $M\equiv0,9,24$ or $33\left(mod\,72\right)$;
or $M\equiv1,2$ or $16\left(mod\,24\right)$; or $M\equiv11\left(mod\,12\right)$.
All the allowed values of $M$ are characterized using necessary conditions.
If $M$ is a square itself, then $M\equiv1\left(mod\,24\right)$ and
$\left(M-1\right)/24$ are all pentagonal numbers, except the first
two.
\end{abstract}
Keywords: Sum of consecutive squared integers ; Congruence

MSC2010 : 11E25 ; 11A07

\section{Introduction}

Lucas stated in 1873 \cite{key-1-24} (see also \cite{key-1-25})
that$\left(1^{2}+...+n^{2}\right)$ is a square only for $n=1$ and
$24$. He proposed further in 1875 \cite{key-1-15} the well known
cannonball problem, namely to find a square number of cannonballs
stacked in a square pyramid. This problem can clearly be written as
a Diophantine equation $\sum_{i=1}^{M}\left(i^{2}\right)=M\left(M+1\right)\left(2M+1\right)/6=s^{2}$.
The only solutions are $s^{2}=1$ and $4900$, which correspond to
the sum of the first $M$ squared integers for $M=1$ and $M=24$.
This was partially proven by Moret-Blanc \cite{key-1-16} and Lucas
\cite{key-1-17}, and entirely proven later on by Watson \cite{key-1-18}
(with elementary proofs in most cases and using elliptic functions
for one case), Ljunggren \cite{key-1-21}, Ma \cite{key-1-19} and
Anglin \cite{key-1-20} (both with only elementary proofs). 

A more general problem is to find all values of $a$ for which the
sum of the $M$ consecutive integer squares starting from $a^{2}\geq1$
is itself an integer square $s^{2}$. Different approaches have been
proposed to solve this problem. Alfred studied \cite{key-1-22} several
necessary conditions on the values of $M$ (with the notations of
this paper), finding that $M=2,11,23,24,26,...$ until $M=500$ by
studying basic congruence equations of $M$, without being able to
conclude if there were solutions for $M=107,193,227,275,457$. This
was further addressed by Philipp \cite{key-1-23} who showed that
solutions exist for $M=107,193,457$ but not for $M=227,275$, and
proving that there are a finite or an infinite number of solutions
depending on whether $M$ is or not a square integer. Laub showed
\cite{key-1-26} that the set of values of $M$ yielding the sum of
$M$ consecutive squared integers being a squared integer, is infinite
and has density zero. Beeckmans demonstrated \cite{key-1-4} eight
necessary conditions on $M$ and gave a list of values of $M<1000$
with the corresponding smallest value of $a>0$, indicating two cases
for $M=25$ and $842$ complying with the eight necessary conditions
but not providing solutions to the problem.

In this paper, the method of determining the set of allowed values
of $M$ that yield the sum of $M$ consecutive squared integers to
be a squared integer is extended by expressing congruent $\left(mod\,12\right)$
conditions on $M$ using Beeckmans' necessary conditions \cite{key-1-4},
showing that $M$ cannot be congruent $\left(mod\,12\right)$ to $3,5,6,7,8$
or $10$ and must be congruent $\left(mod\,12\right)$ to $0,1,2,4,9$
or $11$, yielding that $M\equiv0,9,24$ or $33\left(mod\,72\right)$;
or $M\equiv1,2$ or $16\left(mod\,24\right)$; or $M\equiv11\left(mod\,12\right)$.
It is shown also that if $M$ is a square itself, then $M$ must be
congruent to $1\left(mod\,24\right)$ and $\left(M-1\right)/24$ are
all pentagonal numbers, except the first two.

Throughout the paper, the notation $A\left(mod\, B\right)\equiv C$
is equivalent to $A\equiv C\left(mod\, B\right)$ and $A\equiv C\left(mod\, B\right)\Rightarrow A=Bk+C$
means that, if $A\equiv C\left(mod\, B\right)$, then $\exists k\in\mathbb{Z}^{+}$
such that $A=Bk+C$. By convention, $\sum_{j=inf}^{sup}f\left(j\right)=0$
if $sup<inf$ .

\section{Congruent $\left(mod\,12\right)$ values of $M$}

A first theorem specifies the congruent $\left(mod\,12\right)$ values
that $M$ cannot take. In the demonstration of this theorem, several
numerical series are encountered and the following lemma shows that
these series take integer values for the indicated conditions.
\begin{lem}
For $n,\alpha\in\mathbb{Z}^{+}$ and $i,\delta\in\mathbb{Z}^{*}$:

(i) $\left[\left(3^{2\left(n-1\right)}-1\right)/4\right]$ and \textup{$\left[\left(3^{2n-1}+1\right)/4\right]\in\mathbb{Z}^{*}$,}
$\forall n$; 

\qquad{}furthermore, $\left[\left(3^{2n-1}+1\right)/4\right]\equiv1$
or $3\left(mod\,4\right)$ for $n\equiv1$ or $0\left(mod\,2\right)$;

(ii) $\left[\left(3^{2n-1}-2^{\alpha}+1\right)/12\right]=\left[2\left(\sum_{i=0}^{n-2}3^{2i}\right)-\left(2^{\alpha-2}-1\right)/3\right]\in\mathbb{Z}^{+}$,
$\forall n\geq2$, 

\qquad{}$\forall\alpha\equiv0\left(mod\,2\right)$, and 

\qquad{}$\left[\left(3^{2n-1}-5\times2^{\alpha}+1\right)/12\right]=\left[2\left(\sum_{i=0}^{n-2}3^{2i}\right)-\left(5\times2^{\alpha-2}-1\right)/3\right]\in\mathbb{Z}^{+}$ 

\qquad{}$\forall n\geq2$, $\forall\alpha\equiv1\left(mod\,2\right),\alpha>1$;

(iii) $\left[\left(3^{2n-1}\times13+1\right)/16\right]\in\mathbb{Z}^{+}$,
\textup{$\forall n\equiv0\left(mod\,2\right)$}; 

\qquad{}$\left[\left(3^{2n-1}\times37+1\right)/16\right]\in\mathbb{Z}^{+}$,
\textup{$\forall n\equiv1\left(mod\,2\right)$}; 

\qquad{}$\left[\left(3^{2n-1}\times25-11\right)/16\right]\in\mathbb{Z}^{+}$,
\textup{$\forall n\equiv1\left(mod\,2\right)$}; 

(iv) $\left[\left(3^{2n-1}\left(13+24\delta\right)-23\right)/32\right]\in\mathbb{Z}^{+}$
for $\delta=0$, \textup{$\forall n\equiv3\left(mod\,4\right)$};
for $\delta=1$, 

\qquad{}\textup{$\forall n\equiv0\left(mod\,4\right)$}; for $\delta=2$,
\textup{$\forall n\equiv1\left(mod\,4\right)$}; and for $\delta=3$\textup{,
$\forall n\equiv2\left(mod\,4\right)$.}\end{lem}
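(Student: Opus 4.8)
The plan is to reduce every assertion to an elementary congruence computation, using that $3$ has very small multiplicative order modulo the powers of $2$ that occur: $3^{2}\equiv1\left(mod\,8\right)$, $3^{4}\equiv1\left(mod\,16\right)$ and $3^{8}\equiv1\left(mod\,32\right)$. Hence $3^{2n-1}$ is pinned down modulo $4,16,32$ by the residue of the odd exponent $2n-1$ modulo $2,4,8$, i.e. by $n$ modulo $1,2,4$; concretely $2n-1\equiv1,3\left(mod\,4\right)$ according as $n$ is odd or even, and $2n-1\equiv1,3,5,7\left(mod\,8\right)$ according as $n\equiv1,2,3,0\left(mod\,4\right)$, so that $3^{2n-1}\equiv3$ or $11\left(mod\,16\right)$ in the two relevant cases, and $3^{2n-1}\equiv3,27,19,11\left(mod\,32\right)$ for $n\equiv1,2,3,0\left(mod\,4\right)$. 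I would record these residues once at the outset and then dispatch the four items by substitution.

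Given this, (i) is immediate: $3^{2(n-1)}=9^{n-1}$ and $9\equiv1\left(mod\,8\right)$ give $9^{n-1}-1=8\sum_{i=0}^{n-2}9^{i}\geq0$, divisible by $4$ (indeed by $8$); and $3^{2n-1}\equiv3\left(mod\,4\right)$ makes $3^{2n-1}+1$ a positive multiple of $4$; the ``furthermore'' clause then follows by dividing the values $3$ and $11$ of $3^{2n-1}\left(mod\,16\right)$ by $4$, giving $1$ and $3\left(mod\,4\right)$. For (iii) one substitutes the two values of $3^{2n-1}\left(mod\,16\right)$: $11\cdot13+1=144$, $3\cdot37+1=112$ and $3\cdot25-11=64$ are all multiples of $16$, matching $n$ even, $n$ odd, $n$ odd respectively (positivity is obvious). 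For (iv), substituting the four residues of $3^{2n-1}\left(mod\,32\right)$ together with $13+24\delta\equiv13,5,29,21\left(mod\,32\right)$ for $\delta=0,1,2,3$, one checks that each prescribed pairing $\left(\delta,\,n\bmod4\right)=(0,3),(1,0),(2,1),(3,2)$ yields a product $\equiv23\left(mod\,32\right)$, whence $3^{2n-1}(13+24\delta)-23\equiv0\left(mod\,32\right)$.

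Part (ii) is the only one with two ingredients. First I would verify the stated closed forms algebraically: $2\sum_{i=0}^{n-2}3^{2i}=2\cdot(9^{n-1}-1)/8=(3^{2n-2}-1)/4$, and bringing the right-hand sides over the common denominator $12$ recovers $\bigl(3(3^{2n-2}-1)-4(2^{\alpha-2}-1)\bigr)/12=(3^{2n-1}-2^{\alpha}+1)/12$, and likewise $(3^{2n-1}-5\cdot2^{\alpha}+1)/12$ for the variant (using $2^{\alpha}=4\cdot2^{\alpha-2}$). Integrality then splits as a sum of two integers: $(3^{2n-2}-1)/4\in\mathbb{Z}$ by (i); and since $2\equiv-1\left(mod\,3\right)$ one has $2^{\alpha-2}\equiv1\left(mod\,3\right)$ when $\alpha$ is even, so $(2^{\alpha-2}-1)/3\in\mathbb{Z}$, while $2^{\alpha-2}\equiv2\left(mod\,3\right)$ when $\alpha>1$ is odd, so $5\cdot2^{\alpha-2}\equiv10\equiv1\left(mod\,3\right)$ and $(5\cdot2^{\alpha-2}-1)/3\in\mathbb{Z}$.

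All of this is routine bookkeeping; the only point requiring any care is nonnegativity. For (i), (iii), (iv) it is evident from the explicit values above. For (ii) it is visible from the displayed form $2\sum_{i=0}^{n-2}3^{2i}-(2^{\alpha-2}-1)/3$, which is $\geq0$ exactly when $2^{\alpha}\leq3^{2n-1}+1$ (respectively $5\cdot2^{\alpha}\leq3^{2n-1}+1$); this bound on $\alpha$ relative to $n$ is precisely what holds in the situation where the lemma is invoked in the proof of the theorem below, and I would defer it to that point rather than belabour it here. So I do not expect a genuine obstacle: the proof is a short, if slightly tedious, verification organised entirely around the three modular reductions of $3^{2n-1}$ recorded at the start, plus the elementary $\left(mod\,3\right)$ behaviour of powers of $2$.
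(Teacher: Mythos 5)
Your proposal is correct and follows essentially the same route as the paper: direct reduction of $3^{2n-1}$ modulo $4$, $16$ and $32$ using the order of $3$ modulo these powers of $2$, combined for part (ii) with the identity $\sum_{i=0}^{n-2}3^{2i}=\left(3^{2\left(n-1\right)}-1\right)/8$ and the behaviour of $2^{\alpha-2}$ modulo $3$. Your up-front tabulation of the residues of $3^{2n-1}$ is a tidier organisation than the paper's case-by-case verifications, and your observation that positivity in (ii) actually requires $2^{\alpha}\leq3^{2n-1}+1$ (which you rightly defer to the point where the lemma is applied) flags a caveat the paper passes over in silence.
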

\begin{proof}
For $n,n^{\prime},\alpha\in\mathbb{Z}^{+}$ and $i,\delta\in\mathbb{Z}^{*}$

(i) immediate as $\forall n$, $3^{2\left(n-1\right)}\equiv1\left(mod\,8\right)$
and $3^{2n-1}\equiv3\left(mod\,4\right)$ (%
\footnote{For increasing $n$, the series $\left[\left(3^{2n-1}+1\right)/4\right]=1,7,61,547,4921,...$
is given in \cite{key-1-6}.%
}). Furthermore, 

(i.1) if $n\equiv1\left(mod\,2\right)$ $\Rightarrow$ $n=2n^{\prime}+1$,
assume that $\left[\left(3^{2n-1}+1\right)/4\right]\equiv1\left(mod\,4\right)$,
then $\left(3^{2n-1}+1\right)\equiv4\left(mod\,16\right)$ $\Rightarrow$
$\left(3\left(3^{4n^{\prime}}-1\right)\right)\equiv0\left(mod\,16\right)$,
which is the case as $\forall n^{\prime}$, $\left(3^{2n^{\prime}}+1\right)\equiv0\left(mod\,2\right)$
and $\left(3^{2n^{\prime}}-1\right)\equiv0\left(mod\,8\right)$; 

(i.2) if now $n\equiv0\left(mod\,2\right)$ $\Rightarrow$ $n=2n^{\prime}$,
assume that $\left[\left(3^{2n-1}+1\right)/4\right]\equiv3\left(mod\,4\right)$,
then $\left(3^{2n-1}+1\right)\equiv12\left(mod\,16\right)$ $\Rightarrow$
$\left(3\left(3^{4n^{\prime}-2}-1\right)\right)\equiv8\left(mod\,16\right)$,
which is the case as $\forall n^{\prime}$, $\left(3^{2n^{\prime}-1}+1\right)\equiv0\left(mod\,4\right)$
and $\left(3^{2n^{\prime}-1}-1\right)\equiv0\left(mod\,2\right)$. 

(ii) As $\forall n\geq2$, $\left[\left(3^{2\left(n-1\right)}-1\right)/8\right]=\sum_{i=0}^{n-2}3^{2i}$
(%
\footnote{For increasing $n$, the series $\left[\left(3^{2\left(n-1\right)}-1\right)/8\right]=1,10,91,820,7381,...$
is given in \cite{key-1-5,key-2-5}.%
}), then:
\begin{eqnarray}
\left(\frac{3^{2n-1}-2^{\alpha}+1}{12}\right) & = & \left(\frac{3^{2n-1}-3}{12}\right)-\left(\frac{2^{\alpha}-4}{12}\right)\nonumber \\
 & = & 2\left(\sum_{i=0}^{n-2}3^{2i}\right)-\left(\frac{2^{\alpha-2}-1}{3}\right)\in\mathbb{Z}^{+}\label{eq:7}
\end{eqnarray}
as $\forall\alpha\equiv0\left(mod\,2\right)$, $2^{\alpha-2}\equiv1\left(mod\,3\right)$,
and 
\begin{eqnarray}
\left(\frac{3^{2n-1}-5\times2^{\alpha}+1}{12}\right) & = & \left(\frac{3^{2n-1}-3}{12}\right)-\left(\frac{5\times2^{\alpha}-4}{12}\right)\nonumber \\
 & = & 2\left(\sum_{i=0}^{n-2}3^{2i}\right)-\left(\frac{5\times2^{\alpha-2}-1}{3}\right)\in\mathbb{Z}^{+}\label{eq:8}
\end{eqnarray}
 as $\forall\alpha\equiv1\left(mod\,2\right),\alpha>1$, $2^{\alpha-2}\equiv2\left(mod\,3\right)$
$\Rightarrow$ $\left(5\times2^{\alpha-2}\right)\equiv1\left(mod\,3\right)$.

(iii) Immediate as $\forall n\equiv0\left(mod\,2\right)$, $3^{2n}\equiv1\left(mod\,16\right)$
and $\forall n\equiv1\left(mod\,2\right)$, $3^{2n-1}\equiv3\left(mod\,16\right)$
$\Rightarrow$ $\left(3^{2n-1}\times5\right)\equiv15\left(mod\,16\right)$,
yielding:

(iii.1) $\left(3^{2n-1}\times13+1\right)\left(mod\,16\right)\equiv\left(-3^{2n}+1\right)\left(mod\,16\right)\equiv0$; 

(iii.2) $\left(3^{2n-1}\times37+1\right)\left(mod\,16\right)\equiv\left(3^{2n-1}\times5+1\right)\left(mod\,16\right)\equiv0$;

(iii.3) $\left(3^{2n-1}\times25-11\right)\left(mod\,16\right)\equiv$$\left(5\left(3^{2n-1}\times5+1\right)\right)\left(mod\,16\right)\equiv0$.

(iv) As $\left(3^{2n-1}\left(13+24\delta\right)-23\right)\left(mod\,32\right)\equiv3^{2}\left(3^{2n-3}\left(13+24\delta\right)+1\right)\left(mod\,32\right)$, 

$\delta=0$: $3^{2}\left(3^{2n-3}\times13+1\right)\left(mod\,32\right)\equiv0$
as $\forall n\equiv3\left(mod\,4\right)$, $3^{2n-3}\equiv27\left(mod\,32\right)$
$\Rightarrow$ $\left(3^{2n-3}\times13\right)\equiv31\left(mod\,32\right)$;

$\delta=1$: $3^{2}\left(3^{2n-3}\times37+1\right)\left(mod\,32\right)\equiv3^{2}\left(3^{2n-3}\times5+1\right)\left(mod\,32\right)\equiv0$
as $\forall n\equiv0\left(mod\,4\right)$, $3^{2n-3}\equiv19\left(mod\,32\right)$
$\Rightarrow$ $\left(3^{2n-3}\times5\right)\equiv31\left(mod\,32\right)$; 

$\delta=2$: $3^{2}\left(3^{2n-3}\times61+1\right)\left(mod\,32\right)\equiv3^{2}\left(-3^{2n-2}+1\right)\left(mod\,32\right)\equiv0$
as

$\forall n\equiv1\left(mod\,4\right)$, $3^{2n-2}\equiv1\left(mod\,32\right)$; 

$\delta=3$: $3^{2}\left(3^{2n-3}\times85+1\right)\left(mod\,32\right)\equiv3^{2}\left(3^{2n-2}\times7+1\right)\left(mod\,32\right)\equiv0$
as $\forall n\equiv2\left(mod\,4\right)$, $3^{2n-2}\equiv9\left(mod\,32\right)$
$\Rightarrow$ $\left(3^{2n-2}\times7\right)\equiv31\left(mod\,32\right)$.
\end{proof}
The following theorem can now be demonstrated with the eight necessary
conditions given by Beeckmans \cite{key-1-4} on the value of $M$
for (\ref{eq:53-3}) to hold, that can be summarized as follows, with
the notations of this paper and where $e,\alpha\in\mathbb{Z}^{+}$:

1) If $M\equiv0\left(mod\,2^{e}\right)$ or if $M\equiv0\left(mod\,3^{e}\right)$
or if $M\equiv-1\left(mod\,3^{e}\right)$, then $e\equiv1\left(mod\,2\right)$;
(C1.1, C1.2, C1.3)

2) If $p>3$ is prime, $M\equiv0\left(mod\, p^{e}\right)$, $e\equiv1\left(mod\,2\right)$,
then $p\equiv\pm1\left(mod\,12\right)$; (C2)

3) If $p\equiv3\left(mod\,4\right),p>3$ is prime, $M\equiv-1\left(mod\, p^{e}\right)$,
then $e\equiv0\left(mod\,2\right)$; (C3)

4) $M\neq3\left(mod\,9\right)$, $M\neq\left(2^{\alpha}-1\right)\left(mod\,2^{\alpha+2}\right)$
and $M\neq2^{\alpha}\left(mod\,2^{\alpha+2}\right)$ $\forall\alpha\geq2$.
(C4.1, C4.2, C4.3)
\begin{thm}
For $M>1,\in\mathbb{Z}^{+}$, the sum of squares of $M$ consecutive
integers cannot be an integer square if $M\equiv3,5,6,7,8$ or $10\left(mod\,12\right)$. 
\end{thm}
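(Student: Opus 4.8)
The plan is to establish the stronger statement that every such $M$ already violates one of Beeckmans' conditions (C1.1)--(C4.3); since those conditions are necessary for $\sum_{i=0}^{M-1}\left(a+i\right)^{2}=s^{2}$ to be solvable, the theorem follows at once. Three elementary observations carry the whole argument. (a) The residue $M\left(mod\,12\right)$ determines $M\left(mod\,4\right)$ and $M\left(mod\,3\right)$, hence $M+1\left(mod\,4\right)$ and whether $3\mid M+1$. (b) A positive integer that is $\equiv3\left(mod\,4\right)$ and coprime to $3$ has a prime divisor $p\equiv3\left(mod\,4\right)$ with $p>3$ occurring to an odd exact power (because the number, with multiplicity, of its prime factors $\equiv3\left(mod\,4\right)$ is odd, and none of them is $3$). (c) A positive integer coprime to $6$ that is $\equiv5$ or $7\left(mod\,12\right)$ has a prime divisor $p\equiv5$ or $7\left(mod\,12\right)$ -- so $p>3$ and $p\not\equiv\pm1\left(mod\,12\right)$ -- occurring to an odd exact power; this follows in two lines from the $\left(mod\,4\right)$ and $\left(mod\,3\right)$ characters on $\left(\mathbb{Z}/12\mathbb{Z}\right)^{\times}\cong\left(\mathbb{Z}/2\mathbb{Z}\right)^{2}$.

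The six residues then split into four short arguments. For $M\equiv6,10\left(mod\,12\right)$ we have $M+1\equiv7,11\left(mod\,12\right)$, which is coprime to $3$ and $\equiv3\left(mod\,4\right)$, so by (b) some prime $p>3$, $p\equiv3\left(mod\,4\right)$, divides $M+1$ to an odd exact power, contradicting (C3). For $M\equiv5,7\left(mod\,12\right)$, $M$ itself is coprime to $6$ and $\equiv5$ or $7\left(mod\,12\right)$, so by (c) some prime $p>3$, $p\not\equiv\pm1\left(mod\,12\right)$, divides $M$ to an odd exact power, contradicting (C2). For $M\equiv3\left(mod\,12\right)$, write $M+1=2^{\beta}v$ with $v$ odd; since $M\equiv3\left(mod\,4\right)$ we have $\beta\geq2$, and since $3\nmid M+1$ we have $3\nmid v$. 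If $v\equiv3\left(mod\,4\right)$ then (b) again gives a prime $p>3$, $p\equiv3\left(mod\,4\right)$, dividing $M+1$ to an odd exact power, contradicting (C3); if $v\equiv1\left(mod\,4\right)$ then $M+1\equiv2^{\beta}\left(mod\,2^{\beta+2}\right)$, i.e.\ $M\equiv2^{\beta}-1\left(mod\,2^{\beta+2}\right)$ with $\beta\geq2$, contradicting (C4.2). For $M\equiv8\left(mod\,12\right)$, $M+1\equiv9\left(mod\,12\right)$ is odd, a multiple of $3$, and $\equiv1\left(mod\,4\right)$; writing $M+1=3^{f}v$ with $3\nmid v$, if $f$ is even then $3$ divides $M+1$ to an even exact power, contradicting (C1.3), and if $f$ is odd then $3^{f}\equiv3\left(mod\,4\right)$ forces $v\equiv3\left(mod\,4\right)$, so by (b) some prime $p>3$, $p\equiv3\left(mod\,4\right)$, divides $M+1$ to an odd exact power, contradicting (C3).

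These four arguments are self-contained. The point I would expect to require genuine care -- and the one the Lemma is built to handle, if one instead peels an odd power $3^{2n-1}$ directly off $M$ (the natural move for the classes $M\equiv3,8\left(mod\,12\right)$) rather than routing everything through $M+1$ -- is the small family of ``pure'' sub-cases in which $M$, $M-1$, or $M+1$ is a power of $2$ or of $3$, possibly times a single small prime. There no extra prime is available to trigger (C2) or (C3), and one must apply (C4.1)--(C4.3) with exactly the right parameter, or descend to an explicit congruence modulo $4$, $16$, or $32$. In those sub-cases Lemma (i) shows that $\left(3^{2n-1}+1\right)/4$ is a positive integer and records whether it is $\equiv1$ or $\equiv3\left(mod\,4\right)$ -- which decides whether (C3) (through a hidden prime $\equiv3\left(mod\,4\right)$) or (C4.2) is what fails -- and Lemma (ii)--(iv) do the same for the companion cofactors $\left(3^{2n-1}-2^{\alpha}+1\right)/12$, $\left(3^{2n-1}\times13+1\right)/16$, $\left(3^{2n-1}\left(13+24\delta\right)-23\right)/32$, supplying the residues modulo $16$ and $32$ needed to pin down the failing condition. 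Combining the four short arguments with this treatment of the pure sub-cases proves the theorem.
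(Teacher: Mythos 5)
Your proof is correct, and for the residue classes $5,6,7,10\pmod{12}$ it is essentially the paper's own argument: a prime $\equiv 5$ or $7\pmod{12}$ dividing $M$ to odd exact power contradicts (C2), and a prime $\equiv 3\pmod 4$ dividing $M+1$ to odd exact power contradicts (C3). Where you genuinely diverge is in the two remaining classes, and there your route is materially shorter. For $M\equiv 8\pmod{12}$ you replace the paper's iterative peeling of factors of $3$ from $M+1$ by a direct parity dichotomy on the exponent $f$ of $3$ in $M+1$: $f$ even contradicts (C1.3), while $f$ odd forces the cofactor $v$ to satisfy $v\equiv 3\pmod 4$ (since $M+1\equiv 1\pmod 4$ and $3^{f}\equiv 3\pmod 4$), contradicting (C3) --- same content, cleaner packaging. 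For $M\equiv 3\pmod{12}$ the difference is essential: the paper factors $M$ itself as $3^{2n-1}(12m_n+1)$, etc., and then needs Lemma~1, the congruence (\ref{eq:41-1}), and Tables~1--5 to argue that every residue class of $m_n$ is eventually excluded; as written, the polynomial fits for $\xi$ are exhibited empirically rather than proved, so that portion is the least airtight part of the paper. You instead write $M+1=2^{\beta}v$ with $v$ odd, note $\beta\geq 2$ and $3\nmid v$, and close both branches in one step: if $v\equiv 3\pmod 4$ then some prime $p>3$, $p\equiv 3\pmod 4$, divides $M+1$ to odd exact power, against (C3); if $v\equiv 1\pmod 4$ then $M\equiv 2^{\beta}-1\pmod{2^{\beta+2}}$ with $\beta\geq 2$, against (C4.2). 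That covers all of $M\equiv 3\pmod{12}$ and entirely bypasses Lemma~1 and the tables.

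One small remark: your closing paragraph worries that ``pure'' sub-cases ($M+1$ a power of $2$ or an odd power of $3$ times a small factor) need separate care via Lemma~1. In your setup they do not: $M+1=2^{\beta}$ is exactly the branch $v=1\equiv 1\pmod 4$, already rejected by (C4.2), and $M+1=3^{f}$ with $f$ even is already rejected by (C1.3). Your four arguments are self-sufficient as stated, and the concluding appeal to the Lemma can simply be deleted.
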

The demonstration is made in the order $M\equiv5,7,6,10,8$ and $3\left(mod\,12\right)$. 
\begin{proof}
For $M,\mu,i,k,K,m,m_{i},e_{i},p_{i},n,\alpha,\beta,\epsilon,\gamma_{n},\kappa,\xi,A,B\in\mathbb{Z}^{+},\eta\in\mathbb{Z}$,
$M>1$, $3\leq\mu\leq10$, let $M\equiv\mu\left(mod\,12\right)$ $\Rightarrow$
$M=12m+\mu$.

(i) For $\mu=5$ or $7$, $M=12m+5$ or $12m+7$, let $\prod\left(p_{i}^{e_{i}}\right)$
be the decomposition of $M$ in $i$ prime factors $p_{i}$, with
$\prod\left(p_{i}^{e_{i}}\right)\equiv5$ or $7\left(mod\,12\right)$.
Then one of the prime factors is $p_{j}\equiv5$ or $7\left(mod\,12\right)$
with an exponent $e_{j}\equiv1\left(mod\,2\right)$ (the remaining
co-factor is $\left(\prod\left(p_{i}^{e_{i}}\right)/p_{j}^{e_{j}}\right)\equiv1$
or $11\left(mod\,12\right)$), contradicting (C2) and these values
of $M$ must be rejected.

(ii) For $\mu=6$ or $10$, $M=12m+6$ or $12m+10$, $M+1=4\left(3m+1\right)+3$
or $4\left(3m+2\right)+3$, i.e. in both cases $\left(M+1\right)\equiv3\left(mod\,4\right)$.
Let $\prod\left(p_{i}^{e_{i}}\right)$ be the decomposition of $\left(M+1\right)$
in $i$ prime factors $p_{i}$. Then one of the prime factors is $p_{j}\equiv3\left(mod\,4\right)$
with an exponent $e_{j}\equiv1\left(mod\,2\right)$ (the remaining
co-factor being $\left(\prod\left(p_{i}^{e_{i}}\right)/p_{j}^{e_{j}}\right)\equiv1\left(mod\,4\right)$),
contradicting (C3).

(iv) For $\mu=8$, $M=12m+8$ and $M+1=3\left(4m+3\right)$, cases
appear cyclically with values of $\left(M+1\right)$ having either
a factor $3$ with an even exponent or a factor $f$ such as $f\equiv3\left(mod\,4\right)$.
Indeed, let first $m\neq0\left(mod\,3\right)$ and second $m\equiv0\left(mod\,3\right)$
$\Rightarrow$ $m=3m_{1}$. Let then first $m_{1}\neq2\left(mod\,3\right)$
and second $m_{1}\equiv2\left(mod\,3\right)$ $\Rightarrow$ $m_{1}=3m_{2}+2$.
Let then again first $m_{2}\neq0\left(mod\,3\right)$ and second $m_{2}\equiv0\left(mod\,3\right)$
$\Rightarrow$$m_{2}=3m_{3}$, and so on, yielding: 

$M+1=3\left(4m+3\right)$,

\qquad{}$\Rightarrow$ if $m\neq0\left(mod\,3\right)$ $\Rightarrow$
$\left(M+1\right)\equiv3\left(mod\,4\right)$,

\qquad{}$\Rightarrow$ if $m\equiv0\left(mod\,3\right)$ $\Rightarrow$
$m=3m_{1}$ $\Rightarrow$ $M+1=3^{2}\left(4m_{1}+1\right)$,

\qquad{}\qquad{}$\Rightarrow$ if $m_{1}\neq2\left(mod\,3\right)$
$\Rightarrow$ $\left(M+1\right)\equiv0\left(mod\,3^{2}\right)$,

\qquad{}\qquad{}$\Rightarrow$ if $m_{1}\equiv2\left(mod\,3\right)$
$\Rightarrow$ $m_{1}=3m_{2}+2$ $\Rightarrow$ $M+1=3^{3}\left(4m_{2}+3\right)$,

\qquad{}\qquad{}\qquad{}$\Rightarrow$ if $m_{2}\neq0\left(mod\,3\right)$
$\Rightarrow$ $\left(M+1\right)\equiv3\left(mod\,4\right)$,

\qquad{}\qquad{}\qquad{}$\Rightarrow$ if $m_{2}\equiv0\left(mod\,3\right)$
$\Rightarrow$ $m_{2}=3m_{3}$ $\Rightarrow$ $M+1=3^{4}\left(4m_{3}+1\right)$,

and so on. After $n$ iterations, $\exists m_{n}\in\mathbb{Z}^{+}$
such as either $M+1=3^{n}\left(4m_{n}+1\right)$ if $n\equiv0\left(mod\,2\right)$,
contradicting (C1.3), or $M+1=3^{n}\left(4m_{n}+3\right)$ if $n\equiv1\left(mod\,2\right)$.
Then let $\prod\left(p_{i}^{e_{i}}\right)$ be the decomposition of
$\left[\left(M+1\right)/3^{n}\right]$ in $i$ prime factors $p_{i}$,
with $\prod\left(p_{i}^{e_{i}}\right)\equiv3\left(mod\,4\right)$.
Then one of the prime factors is $p_{j}\equiv3\left(mod\,4\right)$
with an exponent $e_{j}$ such as $e_{j}\equiv1\left(mod\,2\right)$
(the remaining co-factor being such as $\left(\prod\left(p_{i}^{e_{i}}\right)/p_{j}^{e_{j}}\right)\equiv1\left(mod\,4\right)$),
contradicting (C3). Therefore, these values of $M$ must be rejected
in both cases.

(v) For $\mu=3$, $M=3\left(4m+1\right)$, cases appear cyclically
with values of $M$ being the product of a power of $3$ and a factor
which is $\left(mod\,12\right)$ congruent to either $1,5,7$ or $11$. 

(v.1) Let $m$ be successively $\left(mod\,3\right)$ congruent to
$0,1$ and $2$, and the $m\equiv2\left(mod\,3\right)$ step is subdivided
in $m\equiv5,8$ and $2\left(mod\,9\right)$ sub-steps; the process
is then repeated, yielding respectively:

$M=3\left(4m+1\right)$

\qquad{}$\Rightarrow$ if $m=3m_{1}$ $\Rightarrow$ $M=3\left(12m_{1}+1\right)$, 

\qquad{}$\Rightarrow$ if $m=3m_{1}+1$ $\Rightarrow$ $M=3\left(12m_{1}+5\right)$, 

\qquad{}$\Rightarrow$ if $m=3^{2}m_{1}+5$ $\Rightarrow$ $M=3^{2}\left(12m_{1}+7\right)$, 

\qquad{}$\Rightarrow$ if $m=3^{2}m_{1}+8$ $\Rightarrow$ $M=3^{2}\left(12m_{1}+11\right)$, 

\qquad{}$\Rightarrow$ if $m=3^{2}m_{1}+2$ $\Rightarrow$ $M=3^{3}\left(4m_{1}+1\right)$,

\qquad{}\qquad{}$\Rightarrow$ if $m_{1}=3m_{2}$ $\Rightarrow$
$m=3^{3}m_{2}+2$ $\Rightarrow$ $M=3^{3}\left(12m_{2}+1\right)$,

\qquad{}\qquad{}$\Rightarrow$ if $m_{1}=3m_{2}+1$ $\Rightarrow$
$m=3^{3}m_{2}+11$ $\Rightarrow$ $M=3^{3}\left(12m_{2}+5\right)$, 

\qquad{}\qquad{}$\Rightarrow$ if $m_{1}=3^{2}m_{2}+5$ $\Rightarrow$
$m=3^{4}m_{2}+47$ $\Rightarrow$ $M=3^{4}\left(12m_{2}+7\right)$, 

\qquad{}\qquad{}$\Rightarrow$ if $m_{1}=3^{2}m_{2}+8$ $\Rightarrow$
$m=3^{4}m_{2}+74$ $\Rightarrow$ $M=3^{4}\left(12m_{2}+11\right)$,

\qquad{}\qquad{}$\Rightarrow$ if $m_{1}=3^{2}m_{2}+2$ $\Rightarrow$
$m=3^{4}m_{2}+20$ $\Rightarrow$ $M=3^{5}\left(4m_{2}+1\right)$.

Taking again $\left(mod\,3\right)$ and $\left(mod\,9\right)$ congruent
values of $m_{2}$ yield new expressions of $M$ as a product of a
power of $3$ and a factor $\left(mod\,12\right)$ congruent to either
$1$,5,7 or $11$. One obtains then after $n$ iterations, with $\left[\left(3^{2\left(n-1\right)}-1\right)/4\right]\in\mathbb{Z}^{+}$
(see Lemma 1): 

\qquad{}if $m=3^{2n-1}m_{n}+\left[\left(3^{2\left(n-1\right)}-1\right)/4\right]$
$\Rightarrow$ $M=3^{2n-1}\left(12m_{n}+1\right)$, 

\qquad{}if $m=3^{2n-1}m_{n}+\left[3^{2\left(n-1\right)}+\left(3^{2\left(n-1\right)}-1\right)/4\right]$
$\Rightarrow$ $M=3^{2n-1}\left(12m_{n}+5\right)$,

\qquad{}if $m=3^{2n}m_{n}+\left[5\times3^{2\left(n-1\right)}+\left(3^{2\left(n-1\right)}-1\right)/4\right]$
$\Rightarrow$ $M=3^{2n}\left(12m_{n}+7\right)$, 

\qquad{}if $m=3^{2n}m_{n}+\left[8\times3^{2\left(n-1\right)}+\left(3^{2\left(n-1\right)}-1\right)/4\right]$
$\Rightarrow$ $M=3^{2n}\left(12m_{n}+11\right)$.

(v.2) For $M=3^{2n-1}\left(12m_{n}+5\right)$, let $\prod\left(p_{i}^{e_{i}}\right)$
be the decomposition of $\left(M/3^{2n-1}\right)$ in $i$ prime factors
$p_{i}$, with $\prod\left(p_{i}^{e_{i}}\right)\equiv5\left(mod\,12\right)$.
Then one of the prime factors is either $p_{j}\equiv5$ or $7\left(mod\,12\right)$
(the remaining co-factor being respectively either $\left(\prod\left(p_{i}^{e_{i}}\right)/p_{j}\right)\equiv1$
or $11\left(mod\,12\right)$), contradicting (C2).

(v.3) For $M=3^{2n}\left(12m_{n}+7\right)$ and $M=3^{2n}\left(12m_{n}+11\right)$,
both contradict (C1.2) as $\left(12m_{n}+7\right)$ and $\left(12m_{n}+11\right)$
cannot be $\left(mod\,3\right)$ congruent to $0$. 

(v.4) For $M=3^{2n-1}\left(12m_{n}+1\right)$, if $n=1$, $M=3\left(12m_{1}+1\right)$
contradicts (C4.1).

For $n\geq2$, (C4.2) is used first in (v.4.1) to reject some values
of $M$, then (C.3) is used in (v.4.2) to reject those values of $M$
that were not rejected by (C4.2).

(v.4.1) Condition (C4.2) for $M\equiv3\left(mod\,12\right)$ and $\alpha\geq2$
yields 

$M\neq\left(2^{\alpha}-1\right)\left(mod\left(3\times2^{\alpha+2}\right)\right)$
if $2^{\alpha}\equiv1\left(mod\,3\right)$, i.e. $\alpha\equiv0\left(mod\,2\right)$,
and

$M\neq\left(5\times2^{\alpha}-1\right)\left(mod\left(3\times2^{\alpha+2}\right)\right)$
if $2^{\alpha}\neq1\left(mod\,3\right)$, i.e. $\alpha\equiv1\left(mod\,2\right)$.

Those values of $m_{n}$ yielding $M=3^{2n-1}\left(12m_{n}+1\right)$
to be rejected are

\begin{equation}
\left(3^{2n-1}m_{n}\right)\equiv-\beta\left(mod\,2^{\alpha}\right)\label{eq:41-1}
\end{equation}
 with, for $\alpha\equiv0\left(mod\,2\right)$ and Lemma 1, 
\begin{equation}
\beta=\left(\frac{3^{2n-1}-2^{\alpha}+1}{12}\right)=2\left(\sum_{i=0}^{n-2}3^{2i}\right)-\left(\frac{2^{\alpha-2}-1}{3}\right)\label{eq:42-1}
\end{equation}
and, for $\alpha\equiv1\left(mod\,2\right)$ and Lemma 1,
\begin{equation}
\beta=\left(\frac{3^{2n-1}-5\times2^{\alpha}+1}{12}\right)=2\left(\sum_{i=0}^{n-2}3^{2i}\right)-\left(\frac{5\times2^{\alpha-2}-1}{3}\right)\label{eq:43-1}
\end{equation}
Then the values of $m_{n}$ yielding $M=3^{2n-1}\left(12m_{n}+1\right)$
to be rejected are

\begin{equation}
m_{n}\equiv m_{n0}\left(mod\,2^{\alpha}\right)\Rightarrow m_{n}=2^{\alpha}i+m_{n0}\label{eq:44-4}
\end{equation}
where $m_{n0}$ is the smallest value of $m_{n}$ for (\ref{eq:41-1})
to hold, i.e.$\exists m_{n0}\in\mathbb{Z}^{*}$ such as 
\begin{equation}
K=\left(\frac{3^{2n-1}m_{n0}+\beta}{2^{\alpha}}\right)\in\mathbb{Z}^{+}\label{eq:44-3}
\end{equation}
Table 1 shows the first values of $m_{n0}$.
\begin{table}
\caption{Values of $m_{n0}$ }

\centering{}%
\begin{tabular}{|c||c|c|c|c|c|}
\hline 
 & $n=2$ & $n=3$ & $n=4$ & $n=5$ & $n=6$\tabularnewline
\hline 
\hline 
$\alpha=2$ & 2 & 0 & 2 & 0 & 2\tabularnewline
\hline 
$\alpha=3$ & 3 & 5 & 7 & 1 & 3\tabularnewline
\hline 
$\alpha=4$ & 13 & 15 & 1 & 3 & 5\tabularnewline
\hline 
$\alpha=5$ & 17 & 3 & 5 & 23 & 25\tabularnewline
\hline 
$\alpha=6$ & 57 & 11 & 13 & 63 & 33\tabularnewline
\hline 
$\alpha=7$ & 73 & 27 & 93 & 15 & 49\tabularnewline
\hline 
$\alpha=8$ & 105 & 59 & 253 & 47 & 81\tabularnewline
\hline 
\end{tabular}
\end{table}
 For $\alpha=2$ and $\alpha=3$, the values of $m_{n0}$ repeat themselves.
Taking the $\left(mod\,2^{\alpha}\right)$ congruence of $3^{2n-1}$
and $\beta$ in (\ref{eq:44-3}) yield $\left[\left(\left(3^{2n-1}\left(mod\,2^{\alpha}\right)\right)m_{n0}+\beta\left(mod\,2^{\alpha}\right)\right)/2^{\alpha}\right]\in\mathbb{Z}^{+}$,
meaning that for $\alpha=2$ and $3$ and $\forall n$, $3^{2n-1}\equiv3\left(mod\,4\right)$
and $3\left(mod\,8\right)$. Furthermore, from (\ref{eq:42-1}), for
$\alpha=2$, $\beta=2\left(\sum_{i=0}^{n-2}3^{2i}\right)\equiv2$
or $0\left(mod\,4\right)$ for $n\equiv0$ or $1\left(mod\,2\right)$,
while for $\alpha=3$, $\beta=2\left(\sum_{i=0}^{n-2}3^{2i}\right)-3\equiv7,1,3$
or $5\left(mod\,8\right)$ respectively for $n\equiv2,3,0$ or $1\left(mod\,4\right)$.
Therefore, the values of $m_{n0}$ for $\alpha=2$ and $\alpha=3$
appear cyclically, respectively $m_{n0}=2$ and $0$ for $n\equiv0$
and $1\left(mod\,2\right)$, and $m_{n0}=3,5,7$ and $1$ for $n\equiv2,3,0$
and $1\left(mod\,4\right)$.

(v.4.2) Those values of $M=3^{2n-1}\left(12m_{n}+1\right)$ with $n\geq2$
that are not rejected by (C4.2) in the previous section (v.4), can
be rejected by (C3). It is sufficient to show as above that $\left(M+1\right)$
has a factor $f$ such as $f\equiv3\left(mod\,4\right)$, as the decomposition
of $f$ in product of prime factors includes then a prime factor $p_{j}^{e_{j}}$
such as $p_{j}^{e_{j}}\equiv3\left(mod\,4\right)$ with $e_{j}\equiv1\left(mod\,2\right)$.
One has then generally 
\begin{equation}
M+1=3^{2n-1}\left(12m_{n}+1\right)+1=4\left(3^{2n}m_{n}+\frac{3^{2n-1}+1}{4}\right)\label{eq:49-2}
\end{equation}
with $\left[\left(3^{2n-1}+1\right)/4\right]\equiv1$ or $3\left(mod\,4\right)$
for $n\equiv1$ or $0\left(mod\,2\right)$ (see Lemma 1). Let then
$\left[\left(3^{2n-1}+1\right)/4\right]=4\gamma_{n}+1$ or $4\gamma_{n}+3$
for $n\equiv1$ or $0\left(mod\,2\right)$.

(v.4.2.1) For an even number of iterations, i.e. $n\equiv0\left(mod\,2\right)$,
as the values of $m_{n}$ from (\ref{eq:44-4}) yielding $M=3^{2n-1}\left(12m_{n}+1\right)$
to be rejected for $\alpha=2$ are $m_{n}\equiv2\left(mod\,4\right)$,
let us show that $M=3^{2n-1}\left(12m_{n}+1\right)$ can also be rejected
by (C.3) for $m_{n}\equiv0,3$ and $1\left(mod\,4\right)$.

(v.4.2.1.1) Let first $m_{n}\equiv0\left(mod\,4\right)$ $\Rightarrow$
$m_{n}=4m_{n}^{\prime}$ and (\ref{eq:49-2}) yields $M+1=4\left[4\left(3^{2n}m_{n}^{\prime}+\gamma_{n}\right)+3\right]$,
contradicting (C3).

(v.4.2.1.2) Let now $m_{n}\equiv3\left(mod\,4\right)$ and two cases
are considered. 

First, as the values of $m_{n}$ to be rejected for $\alpha=3$ and
$\forall n\equiv0\left(mod\,4\right)$ are $m_{n}\equiv7\left(mod\,8\right)$,
let $m_{n}\equiv3\left(mod\,8\right)$ $\Rightarrow$ $m_{n}=8m_{n}^{\prime}+3$,
yielding with Lemma 1, 
\begin{eqnarray}
M+1 & = & 4\left(3^{2n}\times8m_{n}^{\prime}+3^{2n+1}+\frac{3^{2n-1}+1}{4}\right)\nonumber \\
 & = & 8\left[4\left(3^{2n}m_{n}^{\prime}+\frac{3^{2n-1}\times37-23}{32}\right)+3\right]\label{eq:17}
\end{eqnarray}
contradicting again (C3). 

Second, as the values of $m_{n}$ to be rejected for $\alpha=3$ and
$\forall n\equiv0\left(mod\,4\right)$ are $m_{n}\equiv3\left(mod\,8\right)$,
let $m_{n}\equiv7\left(mod\,8\right)$ $\Rightarrow$ $m_{n}=8m_{n}^{\prime}+7$,
yielding with Lemma 1, 
\begin{eqnarray}
M+1 & = & 4\left(3^{2n}\times8m_{n}^{\prime}+7\times3^{2n}+\frac{3^{2n-1}+1}{4}\right)\nonumber \\
 & = & 8\left[4\left(3^{2n}m_{n}^{\prime}+\frac{3^{2n-1}\times85-23}{32}\right)+3\right]\label{eq:18}
\end{eqnarray}
contradicting again (C3).

(v.4.2.1.3) Let now $m_{n}\equiv1\left(mod\,4\right)$ and consider
more generally the case 

\begin{equation}
m_{n}=4\left(2^{\kappa}m_{n}^{\prime}+\xi\right)+1=2^{\kappa+2}m_{n}^{\prime}+\left(4\xi+1\right)\label{eq:50-2}
\end{equation}
with $\kappa\geq2$ and $0\leq\xi\leq2^{\kappa}-1$, yielding from
(\ref{eq:49-2})
\begin{equation}
M+1=2^{\kappa+2}\left[4\left(3^{2n}m_{n}^{\prime}+A\right)+3\right]\label{eq:18-1}
\end{equation}
with $A=\left(3^{2n-1}\left(48\xi+13\right)-\left(3\times2^{\kappa+2}-1\right)\right)/2^{\kappa+4}$.
The values of $\xi\in\mathbb{Z}^{+}$ that renders $A\in\mathbb{Z}^{+}$
$\forall\kappa\geq2$ and $n\equiv0\left(mod\,2\right)$ are, with
Lemma 1, 
\begin{equation}
\xi=\left(\left(A+3\right)2^{\kappa-2}-\left(\frac{3^{2n-1}\times13+1}{16}\right)\right)3^{-2n}\label{eq:19}
\end{equation}
and shown in Table 2.
\begin{table}
\caption{Values of $\xi$ for $0\leq n<32$ with $n\equiv0\left(mod\,2\right)$
and $2\leq\kappa\leq10$}

\centering{}%
\begin{tabular}{|c||c|c|c|c|c|c|c|c|c|}
\hline 
$n$ & \multicolumn{9}{c|}{$\kappa$}\tabularnewline
\cline{2-10} 
 & 2 & 3 & 4 & 5 & 6 & 7 & 8 & 9 & 10\tabularnewline
\hline 
\hline 
0 & 0 & 3 & 1 & 13 & 5 & 53 & 21 & 213 & 85\tabularnewline
\hline 
2 & 1 & 0  & 6 & 2 & 58 & 42 & 138 & 74 & 970 \tabularnewline
\hline 
4 & 2 & 5 & 11 & 7 & 31 & 15 & 111 & 47 & 943\tabularnewline
\hline 
6 & 3 & 2 & 0 & 28 & 52 & 100 & 196 & 388 & 260\tabularnewline
\hline 
8 & 0 & 7 & 5 & 1 & 57 & 41 & 137 & 329 & 201\tabularnewline
\hline 
10 & 1 & 4 & 10 & 22 & 46 & 94 & 190 & 126 & 1022\tabularnewline
\hline 
12 & 2 & 1 & 15 & 27 & 19 & 3 & 99 & 35 & 931\tabularnewline
\hline 
14 & 3 & 6 & 4 & 16 & 40 & 24 & 120 & 312 & 184\tabularnewline
\hline 
16 & 0 & 3 & 9 & 21 & 45 & 29 & 253 & 189 & 61\tabularnewline
\hline 
18 & 1 & 0  & 14 & 10 & 34 & 18 & 242 & 434 & 818\tabularnewline
\hline 
20 & 2 & 5 & 3 & 15 & 7 & 119 & 87 & 279 & 663\tabularnewline
\hline 
22 & 3 & 2 & 8 & 4 & 28 & 76 & 44 & 492 & 876\tabularnewline
\hline 
24 & 0 & 7 & 13 & 9 & 33 & 17 & 113 & 305 & 689\tabularnewline
\hline 
26 & 1 & 4 & 2 & 30 & 22 & 70 & 38 & 486 & 358\tabularnewline
\hline 
28 & 2 & 1 & 7 & 3 & 59 & 107 & 75 & 267 & 139\tabularnewline
\hline 
30 & 3 & 6 & 12 & 24 & 16 & 0 & 224 & 416 & 288\tabularnewline
\hline 
\end{tabular}
\end{table}
 These values of $\xi$ can be represented as polynomials in $n^{\prime}=n/2$
with the independent term (i.e. the value of $\xi$ for $n\equiv0\left(mod\,2^{\kappa+1}\right)$)
being either $\sum_{i=0}^{\left(\kappa-4\right)/2}\left(2^{2i}\right)$
if $\kappa\equiv0\left(mod\,2\right)$ or $\left(2^{\kappa-2}+\sum_{i=0}^{\left(\kappa-3\right)/2}\left(2^{2i}\right)\right)$
if $\kappa\equiv1\left(mod\,2\right)$. The coefficients $c_{i}$
of the powers of $n^{\prime}$ in the polynomials $P\left(n^{\prime}\right)=\sum c_{i}n^{\prime}{}^{i}$
can be chosen to fit the congruence $\xi\equiv P\left(n^{\prime}\right)\left(mod\,2^{\kappa}\right)$
and the polynomials with the smallest $c_{i}\in\mathbb{Z}^{+}$ are
shown in Table 3.
\begin{table}
\caption{Polynomials $P\left(n^{\prime}\right)$ such as $\xi\equiv P\left(n^{\prime}\right)\left(mod\,2^{\kappa}\right)$
with $n^{\prime}=n/2$}

\centering{}%
\begin{tabular}{|c|c|}
\hline 
$\kappa$ & $P\left(n^{\prime}\right)\left(mod\,2^{\kappa}\right)$\tabularnewline
\hline 
\hline 
2 & $\left(n^{\prime}\right)\left(mod\,4\right)$\tabularnewline
\hline 
3 & $\left(5n^{\prime}+3\right)\left(mod\,8\right)$\tabularnewline
\hline 
4 & $\left(5n^{\prime}+1\right)\left(mod\,16\right)$\tabularnewline
\hline 
5 & $\left(8n^{\prime}{}^{2}+13n^{\prime}+13\right)\left(mod\,32\right)$\tabularnewline
\hline 
6 & $\left(24n^{\prime}{}^{2}+29n^{\prime}+15\right)\left(mod\,64\right)$\tabularnewline
\hline 
7 & $\left(56n^{\prime}{}^{2}+61n^{\prime}+53\right)\left(mod\,128\right)$\tabularnewline
\hline 
8 & $\left(56n^{\prime}{}^{2}+61n^{\prime}+21\right)\left(mod\,256\right)$\tabularnewline
\hline 
9 & $\left(384n^{\prime}{}^{3}+440n^{\prime}{}^{2}+61n^{\prime}+213\right)\left(mod\,512\right)$\tabularnewline
\hline 
10 & $\left(384n^{\prime}{}^{3}+952n^{\prime}{}^{2}+573n^{\prime}+85\right)\left(mod\,1024\right)$\tabularnewline
\hline 
\end{tabular}
\end{table}
 Replacing these values of $\xi$ in $m_{n}=2^{\kappa+2}m_{n}^{\prime}+\left(4\xi+1\right)$
yields $\left(M+1\right)$ to have a factor $f$ such as $f\equiv3\left(mod\,4\right)$,
contradicting again (C3). Therefore, all values of $n\equiv0\left(mod\,2\right)$
yield $M$ to be rejected.

(v.4.2.2) For an odd number of iterations, i.e. $n\equiv1\left(mod\,2\right)$,
as the values of $m_{n}$ from (\ref{eq:44-4}) yielding $M=3^{2n-1}\left(12m_{n}+1\right)$
to be rejected for $\alpha=2$ are $m_{n}\equiv0\left(mod\,4\right)$,
let us show that $M=3^{2n-1}\left(12m_{n}+1\right)$ can also be rejected
by (C.3) for $m_{n}\equiv2,1$ and $3\left(mod\,4\right)$.

(v.4.2.2.1) Let first $m_{n}\equiv2\left(mod\,4\right)$ $\Rightarrow$
$m_{n}=4m_{n}^{\prime}+2$ and (\ref{eq:49-2}) yields then 
\begin{eqnarray}
M+1 & = & 4\left(3^{2n}m_{n}+\gamma_{n}+1\right)\nonumber \\
 & = & 4\left[4\left(3^{2n}m_{n}^{\prime}+\frac{3^{2n-1}\times25-11}{16}\right)+3\right]\label{eq:20}
\end{eqnarray}
with Lemma 1, contradicting again (C3). 

(v.4.2.2.2) Let now $m_{n}\equiv1\left(mod\,4\right)$ and two cases
are again considered. 

First, as the values of $m_{n}$ to be rejected for $\alpha=3$ and
$\forall n\equiv3\left(mod\,4\right)$ are $m_{n}\equiv5\left(mod\,8\right)$,
let $m_{n}\equiv1\left(mod\,8\right)$ $\Rightarrow$ $m_{n}=8m_{n}^{\prime}+1$,
yielding with Lemma 1, 
\begin{eqnarray}
M+1 & = & 4\left(3^{2n}\times8m_{n}^{\prime}+3^{2n}+\frac{3^{2n-1}+1}{4}\right)\nonumber \\
 & = & 8\left[4\left(3^{2n}m_{n}^{\prime}+\frac{3^{2n-1}\times13-23}{32}\right)+3\right]\label{eq:21-1}
\end{eqnarray}
contradicting again (C3). 

Second, as the values of $m_{n}$ to be rejected for $\alpha=3$ and
$\forall n\equiv1\left(mod\,4\right)$ are $m_{n}\equiv1\left(mod\,8\right)$,
let $m_{n}\equiv5\left(mod\,8\right)$ $\Rightarrow$ $m_{n}=8m_{n}^{\prime}+5$,
yielding with Lemma 1, 
\begin{eqnarray}
M+1 & = & 4\left(3^{2n}\times8m_{n}^{\prime}+5\times3^{2n}+\frac{3^{2n-1}+1}{4}\right)\nonumber \\
 & = & 8\left[4\left(3^{2n}m_{n}^{\prime}+\frac{3^{2n-1}\times61-23}{32}\right)+3\right]\label{eq:22}
\end{eqnarray}
contradicting again (C3).

(v.4.2.2.3) Let now $m_{n}\equiv3\left(mod\,4\right)$ and consider
more generally the case 

\begin{equation}
m_{n}=4\left(2^{\kappa}m_{n}^{\prime}+\xi\right)+3=2^{\kappa+2}m_{n}^{\prime}+\left(4\xi+3\right)\label{eq:50-2-1}
\end{equation}
with $\kappa\geq2$ and $0\leq\xi\leq2^{\kappa}-1$, yielding
\begin{equation}
M+1=2^{\kappa+2}\left[4\left(3^{2n}m_{n}^{\prime}+B\right)+3\right]\label{eq:23}
\end{equation}
with $B=\left(3^{2n-1}\left(48\xi+37\right)-\left(3\times2^{\kappa+2}-1\right)\right)/2^{\kappa+4}$.
The values of $\xi\in\mathbb{Z}^{+}$ that renders $B\in\mathbb{Z}^{+}$
$\forall\kappa\geq2$ and $n\equiv1\left(mod\,2\right)$ are, with
Lemma 1,
\begin{equation}
\xi=\left(\left(B+3\right)2^{\kappa-2}-\left(\frac{3^{2n-1}\times37+1}{16}\right)\right)3^{-2n}\label{eq:25}
\end{equation}
and shown in Table 4\textbf{.}
\begin{table}
\caption{Values of $\xi$ for $0<n<32$ with $n\equiv1\left(mod\,2\right)$
and $2\leq\kappa\leq10$}

\centering{}%
\begin{tabular}{|c||c|c|c|c|c|c|c|c|c|}
\hline 
$n$ & \multicolumn{9}{c|}{$\kappa$}\tabularnewline
\cline{2-10} 
 & 2 & 3 & 4 & 5 & 6 & 7 & 8 & 9 & 10\tabularnewline
\hline 
\hline 
1 & 0 & 7 & 13 & 9 & 33 & 81 & 49 & 497 & 881\tabularnewline
\hline 
3 & 1 & 4 & 10 & 22 & 46 & 94 & 62 & 254 & 638\tabularnewline
\hline 
5 & 2 & 1 & 7 & 19 & 43 & 91 & 59 & 251 & 635\tabularnewline
\hline 
7 & 3 & 6 & 4 & 0 & 24 & 72 & 40 & 232 & 104\tabularnewline
\hline 
9 & 0 & 3 & 1 & 29 & 53 & 37 & 5 & 453 & 325\tabularnewline
\hline 
11 & 1 & 0 & 14 & 10 & 2 & 114 & 210 & 146 & 530\tabularnewline
\hline 
13 & 2 & 5 & 11 & 7 & 63 & 47 & 143 & 79 & 975\tabularnewline
\hline 
15 & 3 & 2 & 8 & 20 & 44 & 92 & 60 & 508 & 892\tabularnewline
\hline 
17 & 0 & 7 & 5 & 17 & 9 & 121 & 217 & 153 & 537\tabularnewline
\hline 
19 & 1 & 4 & 2 & 30 & 22 & 6 & 102 & 294 & 166\tabularnewline
\hline 
21 & 2 & 1 & 15 & 27 & 19 & 3 & 227 & 163 & 35\tabularnewline
\hline 
23 & 3 & 6 & 12 & 8 & 0 & 112 & 80 & 16 & 400\tabularnewline
\hline 
25 & 0 & 3 & 9 & 5 & 29 & 77 & 173 & 109 & 493\tabularnewline
\hline 
27 & 1 & 0 & 6 & 18 & 42 & 26 & 250 & 186 & 570\tabularnewline
\hline 
29 & 2 & 5 & 3 & 15 & 39 & 87 & 55 & 503 & 887\tabularnewline
\hline 
31 & 3 & 2 & 0 & 28 & 20 & 4 & 100 & 292 & 676\tabularnewline
\hline 
\end{tabular}
\end{table}
 These values of $\xi$ can be represented as polynomials in $n^{\prime}=\left(n-1\right)/2$
with the independent term (i.e. the value of $\xi$ for $n\equiv1\left(mod\,2^{\kappa+1}\right)$)
being either $\left(\sum_{i=0}^{\left(\kappa-4\right)/2}\left(2^{2i}\right)+4\sigma_{\kappa/2}\right)\left(mod\,2^{\kappa}\right)$
or 

$\left(\sum_{i=0}^{\left(\kappa-3\right)/2}\left(2^{2i}\right)+4\sigma_{\left(\kappa-1\right)/2}+2^{\kappa-2}\right)\left(mod\,2^{\kappa}\right)$
if respectively $\kappa\equiv0$ or $1\left(mod\,2\right)$, where
the integer sequence $\sigma_{j}=1,3,7,71,199,...$ is given in \cite{key-1-7,key-1-8}.
The coefficients $c_{i}$ of the powers of $n^{\prime}$ in the polynomials
$P\left(n^{\prime}\right)=\sum c_{i}n^{\prime}{}^{i}$ can also be
chosen to fit the congruence $\xi\equiv P\left(n^{\prime}\right)\left(mod\,2^{\kappa}\right)$
and the polynomials with the smallest $c_{i}\in\mathbb{Z}^{+}$ are
shown in Table 5.
\begin{table}
\caption{Polynomials $P\left(n^{\prime}\right)$ such as $\xi\equiv P\left(n^{\prime}\right)\left(mod\,2^{\kappa}\right)$
with $n^{\prime}=\left(n-1\right)/2$ }

\centering{}%
\begin{tabular}{|c|c|}
\hline 
$\kappa$ & $P\left(n^{\prime}\right)\left(mod\,2^{\kappa}\right)$\tabularnewline
\hline 
\hline 
2 & $\left(n^{\prime}\right)\left(mod\,4\right)$\tabularnewline
\hline 
3 & $\left(5n^{\prime}+7\right)\left(mod\,8\right)$\tabularnewline
\hline 
4 & $\left(13n^{\prime}+13\right)\left(mod\,16\right)$\tabularnewline
\hline 
5 & $\left(8n^{\prime}{}^{2}+5n^{\prime}+9\right)\left(mod\,32\right)$\tabularnewline
\hline 
6 & $\left(24n^{\prime}{}^{2}+53n^{\prime}+33\right)\left(mod\,64\right)$\tabularnewline
\hline 
7 & $\left(56n^{\prime}{}^{2}+85n^{\prime}+81\right)\left(mod\,128\right)$\tabularnewline
\hline 
8 & $\left(120n^{\prime}{}^{2}+149n^{\prime}+49\right)\left(mod\,256\right)$\tabularnewline
\hline 
9 & $\left(384n^{\prime}{}^{3}+504n^{\prime}{}^{2}+405n^{\prime}+457\right)\left(mod\,512\right)$\tabularnewline
\hline 
10 & $\left(384n^{\prime}{}^{3}+1016n^{\prime}{}^{2}+425n^{\prime}+881\right)\left(mod\,1024\right)$\tabularnewline
\hline 
\end{tabular}
\end{table}
 Replacing these values of $\xi$ in $m_{n}=2^{\kappa+2}m_{n}^{\prime}+\left(4\xi+3\right)$
yields $\left(M+1\right)$ to have a factor $f$ such as $f\equiv3\left(mod\,4\right)$,
contradicting again (C3). 

Therefore, all values of $n\equiv1\left(mod\,2\right)$ yield $M$
to be rejected.

(v.5) It follows that the sum of squares of $M$ consecutive integers
cannot be an integer square if $M\equiv3,5,6,7,8$ or $10\left(mod\,12\right)$.\end{proof}
\begin{example}
For an even number of iterations $n\equiv0\left(mod\,2\right)$ in
the case (v.4.2.1.3) above, the following example for $n=2$ shows
that there are no $m_{n}\equiv1\left(mod\,4\right)$ values such that
the sum of squares of $M=\left(12m_{n}+3\right)$ consecutive integers
can be an integer square as the following values of $m_{n}$ have
to be rejected:
\end{example}
$m_{n}\equiv1\left(mod\,32\right)$, i.e, $1,33,65,...$, by (C3),
$\kappa=3$, $\xi=0$ in (\ref{eq:50-2});

$m_{n}\equiv5\left(mod\,16\right)$, i.e. $5,21,37,...$, by (C3),
$\kappa=2$, $\xi=1$ in (\ref{eq:50-2});

$m_{n}\equiv9\left(mod\,128\right)$, i.e. $9,137,245,...$, by (C3),
$\kappa=5$, $\xi=2$ in (\ref{eq:50-2});

$m_{n}\equiv13\left(mod\,16\right)$, i.e. $13,29,45,...$, by (C4.2),
$\alpha=4$, $m_{n0}=13$ in (\ref{eq:44-4});

$m_{n}\equiv17\left(mod\,32\right)$, i.e. $17,49,81,...$, by (C4.2),
$\alpha=5$, $m_{n0}=17$ in (\ref{eq:44-4});

$m_{n}\equiv25\left(mod\,64\right)$, i.e. $25,89,153,...$, by (C3),
$\kappa=4$, $\xi=6$ in (\ref{eq:50-2});

$m_{n}\equiv41\left(mod\,1024\right)$, i.e. $41,1065,...$, by (C4.2),
$\alpha=10$, $m_{n0}=41$ in (\ref{eq:44-4});

$m_{n}\equiv57\left(mod\,64\right)$, i.e. $57,121,185,...$, by (C4.2),
$\alpha=6$, $m_{n0}=57$ in (\ref{eq:44-4}); etc.

For an odd number of iterations (i.e. $n\equiv1\left(mod\,2\right)$)
in the case (v.4.2.2.3) above, the following example for $n=3$ shows
that there are no $m_{n}\equiv3\left(mod\,4\right)$ values such that
the sum of squares of $M=\left(12m_{n}+3\right)$ consecutive integers
can be an integer square as the following values of $m_{n}$ have
to be rejected:

$m_{n}\equiv3\left(mod\,32\right)$, i.e. $3,35,67,...$, by (C4.2),
$\alpha=5$, $m_{n0}=3$ in (\ref{eq:44-4});

$m_{n}\equiv7\left(mod\,16\right)$, i.e. $7,23,39,...$, by (C3),
$\kappa=2$, $\xi=1$ in (\ref{eq:50-2-1});

$m_{n}\equiv11\left(mod\,64\right)$, i.e. $11,75,139,...$, by (C4.2),
$\alpha=6$, $m_{n0}=11$ in (\ref{eq:44-4});

$m_{n}\equiv15\left(mod\,16\right)$, i.e. $15,31,47,...$, by (C4.2),
$\alpha=4$, $m_{n0}=15$ in (\ref{eq:44-4});

$m_{n}\equiv19\left(mod\,32\right)$, i.e. $19,51,83,...$, by (C3),
$\kappa=3$, $\xi=4$ in (\ref{eq:50-2-1}); 

$m_{n}\equiv27\left(mod\,128\right)$, i.e. $27,155,283,...$, by
(C4.2), $\alpha=7$, $m_{n0}=27$ in (\ref{eq:44-4});

$m_{n}\equiv43\left(mod\,64\right)$, i.e. $43,107,171,...$, by (C3),
$\kappa=4$, $\xi=10$ in (\ref{eq:50-2-1}); etc.
\begin{rem}
Note that the values of $m_{n0}$ in section (v.4.1) above are not
independent and within the same $n^{th}$ iteration, the value $m_{n0,\alpha}$
of $m_{n0}$ for a given value of $\alpha$ is related to the preceding
value $m_{n0,\left(\alpha-1\right)}$ for $\left(\alpha-1\right)$
by 

\begin{equation}
m_{n0,\alpha}\left(mod\,2^{\alpha}\right)\equiv\left(m_{n0,\left(\alpha-1\right)}+\epsilon\times2^{\alpha-1}+2^{\alpha-3}\right)\label{eq:44-2}
\end{equation}
with either $\epsilon=-1$, or $0$, or $+1$. From (\ref{eq:44-3}),
$m_{n0}=\left(2^{\alpha}K-\beta\right)/3^{2n-1}$ and one has respectively
\begin{eqnarray}
m_{n0} & = & \frac{2^{\alpha}K-2\left(\sum_{i=0}^{n-2}3^{2i}\right)+\left(\frac{2^{\alpha-2}-1}{3}\right)}{3^{2n-1}}\,\,\textnormal{if}\,\,\alpha\equiv0\left(mod\,2\right)\label{eq:46}\\
m_{n0} & = & \frac{2^{\alpha}K-2\left(\sum_{i=0}^{n-2}3^{2i}\right)+\left(\frac{5\times2^{\alpha-2}-1}{3}\right)}{3^{2n-1}}\,\,\textnormal{if}\,\,\alpha\equiv1\left(mod\,2\right)\label{eq:47-1}
\end{eqnarray}
Forming now the difference $m_{n0,\alpha}-m_{n0,\left(\alpha-1\right)}$,
one obtains $m_{n0,\alpha}-m_{n0,\left(\alpha-1\right)}=\epsilon\times2^{\alpha-1}+2^{\alpha-3}$
with
\begin{equation}
\epsilon=\frac{\left(2K_{\alpha}-K_{\left(\alpha-1\right)}\right)-\left(\frac{3^{2n-1}+1}{4}+\eta\right)}{3^{2n-1}}\label{eq:48}
\end{equation}
with $\eta=0$ or $-1$ if $\alpha\equiv0$ or $1\left(mod\,2\right)$
and where $K_{\alpha}$ and $K_{\left(\alpha-1\right)}$ are the values
of $K$ corresponding to $m_{n0,\alpha}$ and $m_{n0,\left(\alpha-1\right)}$
in (\ref{eq:44-3}). $\epsilon=1$ or $0$ or $-1$ if $2K_{\alpha}-K_{\left(\alpha-1\right)}=\left(\eta+\left(5\times3^{2n-1}+1\right)/4\right)$
or $\left(\eta+\left(3^{2n-1}+1\right)/4\right)$ or $\left(\eta+\left(1-3^{2n}\right)/4\right)$
(see also Lemma 1).
\end{rem}
The next theorem gives additional conditions on the allowed $\left(mod\,12\right)$
congruent values that $M$ can take.
\begin{thm}
For $M>1,a,s\in\mathbb{Z}^{+}$, $i\in\mathbb{Z}^{*}$, there exist
$M$ satisfying $M\equiv0,1,2,4,9$ or $11\left(mod\,12\right)$ such
as the sums of $M$ consecutive squared integers $\left(a+i\right)^{2}$
equal integer squares $s^{2}$. Furthermore, if $M\equiv0\left(mod\,12\right)$,
then $M\equiv0$ or $24\left(mod\,72\right)$; if $M\equiv1\left(mod\,12\right)$,
then $M\equiv1\left(mod\,24\right)$; if $M\equiv2\left(mod\,12\right)$,
then $M\equiv2\left(mod\,24\right)$; if $M\equiv4\left(mod\,12\right)$,
then $M\equiv16\left(mod\,24\right)$; if $M\equiv9\left(mod\,12\right)$,
then $M\equiv9$ or $33\left(mod\,72\right)$; and the corresponding
congruent values of $a$ and $s$ are given in Table 6.
\begin{table}
\begin{centering}
\caption{Congruent values of $M$, $m$, $a$, and $s$}

\par\end{centering}

\centering{}%
\begin{tabular}{|c|c|c|c|c|}
\hline 
$\mu$ & $M\equiv$ & $m\equiv$ & $a\equiv$ & $s\equiv$\tabularnewline
\hline 
\hline 
$0$ & $0\left(mod\,72\right)$ & $0\left(mod\,6\right)$ & $\forall$ & $0\left(mod\,6\right)$\tabularnewline
\cline{2-5} 
 & $24\left(mod\,72\right)$ & $2\left(mod\,6\right)$ & $\forall$ & $2$ or $4\left(mod\,6\right)$\tabularnewline
\hline 
$1$ & $1\left(mod\,24\right)$ & $0\left(mod\,6\right)$ & $\forall$ & $\forall$\tabularnewline
\cline{3-5} 
 &  & $2\left(mod\,6\right)$ & $0\left(mod\,6\right)$ & $2$ or $4\left(mod\,6\right)$\tabularnewline
\cline{4-5} 
 &  &  & $3\left(mod\,6\right)$ & $1$ or $5\left(mod\,6\right)$\tabularnewline
\cline{3-5} 
 &  & $4\left(mod\,6\right)$ & $1\left(mod\,2\right)$ & $3\left(mod\,6\right)$\tabularnewline
\cline{4-5} 
 &  &  & $0\left(mod\,2\right)$ & $0\left(mod\,6\right)$\tabularnewline
\hline 
$2$ & $2\left(mod\,24\right)$ & $0\left(mod\,6\right)$ & $0,2\left(mod\,3\right)$ & $1$ or $5\left(mod\,6\right)$\tabularnewline
\cline{3-5} 
 &  & $2\left(mod\,6\right)$ & $1\left(mod\,3\right)$ & $3\left(mod\,6\right)$\tabularnewline
\cline{3-5} 
 &  & $4\left(mod\,6\right)$ & $\forall$ & $1\left(mod\,2\right)$\tabularnewline
\hline 
$4$ & $16\left(mod\,24\right)$ & $1\left(mod\,6\right)$ & $0\left(mod\,3\right)$ & $2$ or $4\left(mod\,6\right)$\tabularnewline
\cline{3-5} 
 &  & $3\left(mod\,6\right)$ & $1,2\left(mod\,3\right)$ & $0\left(mod\,6\right)$\tabularnewline
\cline{3-5} 
 &  & $5\left(mod\,6\right)$ & $\forall$ & $0\left(mod\,2\right)$\tabularnewline
\hline 
$9$ & $9\left(mod\,72\right)$ & $0\left(mod\,6\right)$ & $0\left(mod\,2\right)$ & $0\left(mod\,6\right)$\tabularnewline
\cline{4-5} 
 &  &  & $1\left(mod\,2\right)$ & $3\left(mod\,6\right)$\tabularnewline
\cline{2-5} 
 & $33\left(mod\,72\right)$ & $2\left(mod\,6\right)$ & $0\left(mod\,2\right)$ & $2$ or $4\left(mod\,6\right)$\tabularnewline
\cline{4-5} 
 &  &  & $1\left(mod\,2\right)$ & $1$ or $5\left(mod\,6\right)$\tabularnewline
\hline 
$11$ & $11\left(mod\,12\right)$ & $0\left(mod\,6\right)$ & $0,2\left(mod\,6\right)$ & $1$ or $5\left(mod\,6\right)$\tabularnewline
\cline{3-5} 
 &  & $1\left(mod\,6\right)$ & $1\left(mod\,6\right)$ & $2$ or $4\left(mod\,6\right)$\tabularnewline
\cline{4-5} 
 &  &  & $3,5\left(mod\,6\right)$ & $0\left(mod\,6\right)$\tabularnewline
\cline{3-5} 
 &  & $2\left(mod\,6\right)$ & $4\left(mod\,6\right)$ & $3\left(mod\,6\right)$\tabularnewline
\cline{3-5} 
 &  & $3\left(mod\,6\right)$ & $3,5\left(mod\,6\right)$ & $2$ or $4\left(mod\,6\right)$\tabularnewline
\cline{3-5} 
 &  & $4\left(mod\,6\right)$ & $0,2\left(mod\,6\right)$ & $3\left(mod\,6\right)$\tabularnewline
\cline{4-5} 
 &  &  & $4\left(mod\,6\right)$ & $1$ or $5\left(mod\,6\right)$\tabularnewline
\cline{3-5} 
 &  & $5\left(mod\,6\right)$ & $1\left(mod\,6\right)$ & $0\left(mod\,6\right)$\tabularnewline
\hline 
\end{tabular}
\end{table}
\end{thm}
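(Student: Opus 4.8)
The plan is to establish the two assertions in turn: the existence of at least one solution in each admissible residue class of $M$, and the necessity of the refined congruences together with the entries of Table 6. The first move is to recast the governing equation (\ref{eq:53-3}) in a form suited to the parity of $M$: setting $c=2a+M-1$, one computes $\sum_{i=0}^{M-1}(a+i)^{2}=\frac{1}{12}M(3c^{2}+M^{2}-1)$, so the problem is equivalent to $12s^{2}=M(3c^{2}+M^{2}-1)$, in which $c$ is a free parameter subject only to $c\equiv M-1\left(mod\,2\right)$. When $M$ is odd, $c=2d$ is even; using $8\mid M^{2}-1$ (and $3\mid M^{2}-1$ when $3\nmid M$), the factor $12$ cancels against $M$ and against these factors. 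For each admissible $\mu\in\{0,1,2,4,9,11\}$ I would put $M=12m+\mu$, carry out this cancellation explicitly, and arrive at an identity $\varepsilon s^{2}=f(m)\,(u^{2}+g(m))$ with $\varepsilon\in\{1,2\}$, $u\in\{c,d\}$, and $f,g$ explicit polynomials in $m$.

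For necessity I would analyse this identity modulo $9$ and modulo $16$. Reducing modulo $3$ gives a quadratic-residue condition on $m$ (squares are $0,1\left(mod\,3\right)$ and $g(m)$ is explicit), which rules out $m\equiv1\left(mod\,3\right)$ when $\mu=0,9$ and yields the stated alternatives for $m$ modulo $3$ otherwise; this is what separates $M\equiv0$ from $M\equiv48\left(mod\,72\right)$, for instance. The class of $m$ modulo $2$ follows from $2$-adic information: for $\mu=0,4$ a direct valuation count (the bracket $u^{2}+g(m)$ has $2$-adic valuation exactly $1$, since it is $\equiv2\left(mod\,8\right)$) forces $m$ even, respectively odd; for $\mu=1,2,9$ this count is inconclusive and one instead extracts a contradiction from a residue computation modulo $8$ or $16$, using that odd squares are $\equiv1\left(mod\,8\right)$, again forcing the asserted parity of $m$. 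Combining the conditions on $m=(M-\mu)/12$ modulo $2$ and modulo $3$ translates immediately into the stated congruence for $M$ modulo $24$ (for $\mu=1,2,4$) or modulo $72$ (for $\mu=0,9$), while $\mu=11$ admits no refinement. The $a$- and $s$-columns of Table 6 emerge from the same reductions: carrying $c=2a+M-1$ through the modulo-$16$ step fixes $a$ modulo $2$ or $4$, reducing modulo $9$ ties the residue of $a$ modulo $3$ to that of $s$ modulo $3$, the parity of $s$ is already determined, and grouping the outcomes by the admissible class of $m$ modulo $6$ produces the rows of the table.

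For existence it suffices to display one triple $(M,a,s)$ in each class and verify it against Table 6: $(24,1,70)$ for $\mu=0$ (the cannonball identity $1^{2}+\cdots+24^{2}=70^{2}$), $(2,3,5)$ for $\mu=2$, $(33,7,143)$ for $\mu=9$, $(11,18,77)$ for $\mu=11$, $(49,25,357)$ for $\mu=1$ (here $s^{2}=49(d^{2}+200)$ has the solution $(d,s)=(49,357)$, giving $a=25$), and for $\mu=4$ any solution with $M\equiv16\left(mod\,24\right)$, such as $M=88$, where $s^{2}=22(c^{2}+2581)$ reduces to the Pell-type equation $c^{2}-22t^{2}=-2581$ and yields, for example, $c=11937$, $a=5925$, $s=55990$.

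The step I expect to be the main obstacle is the necessity argument for $\mu=1,2,9$, together with the full $a$–$s$ bookkeeping (particularly for $\mu=11$, which has the most rows): unlike $\mu=0,4$, where a single $2$-adic valuation count settles the class of $m$, here the factor that ought to carry an odd valuation does not, and one is forced into a finite but delicate case analysis modulo $16$ and modulo $9$, tracking the free parameter $c$ (equivalently $a$) through each branch — essentially the same kind of bookkeeping that makes the proof of the preceding theorem lengthy.
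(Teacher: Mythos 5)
Your proposal is correct, and for the necessity part it is essentially the paper's own argument in a different dress: both substitute $M=12m+\mu$ into the closed form (\ref{eq:53-3}) and read off congruences on $m$, $a$ and $s$. The paper does this in a single reduction $\left(mod\,12\right)$ of (\ref{eq:53-2}), using that squares are congruent to $0,1,4$ or $9\left(mod\,12\right)$; you split the primes, writing $12s^{2}=M\left(3c^{2}+M^{2}-1\right)$ with $c=2a+M-1$ and arguing by $2$-adic valuation or reduction $\left(mod\,16\right)$ together with reduction $\left(mod\,3\right)$ or $\left(mod\,9\right)$. Since the square classes $0,1,4,9\left(mod\,12\right)$ are exactly the conjunction of the $\left(mod\,4\right)$ and $\left(mod\,3\right)$ information you invoke, the two bookkeepings are interchangeable, and your sample computations (the valuation count forcing $m$ even for $\mu=0$ and $m$ odd for $\mu=4$, and the $\left(mod\,16\right)$ argument forcing $m$ even for $\mu=2$) check out against the paper's conclusions. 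Where you genuinely add something is the existence half: the paper's proof derives only the necessary congruences and never actually exhibits a solution in each admissible class, whereas your witnesses $\left(M,a,s\right)=\left(24,1,70\right)$, $\left(49,25,357\right)$, $\left(2,3,5\right)$, $\left(88,5925,55990\right)$, $\left(33,7,143\right)$ and $\left(11,18,77\right)$ all verify and cover the six classes, so on this point your treatment is more complete than the paper's. What remains owed in both treatments is the full row-by-row derivation of the $a$ and $s$ columns of Table 6, which you correctly identify as mechanical but lengthy; nothing in your sketch suggests it would fail.
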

\begin{proof}
For $M>1,m,a,s\in\mathbb{Z}^{+}$ and $\mu,i\in\mathbb{Z}^{*}$, $0\leq\mu\leq11$,
let $M\equiv\mu\left(mod\,12\right)$ $\Rightarrow$ $M=12m+\mu$. 

Expressing the sum of $M$ consecutive integer squares starting from
$a^{2}$ equal to an integer square $s^{2}$ as

\begin{equation}
\sum_{i=0}^{M-1}\left(a+i\right)^{2}=M\left[\left(a+\frac{M-1}{2}\right)^{2}+\frac{M^{2}-1}{12}\right]=s^{2}\label{eq:53-3}
\end{equation}

and replacing $M$ by $12m+\mu$ in (\ref{eq:53-3}) yields

\begin{equation}
\left(12m+\mu\right)\left[a^{2}+a\left(12m+\mu-1\right)+48m^{2}+2m\left(4\mu-3\right)+\frac{2\mu^{2}-3\mu+1}{6}\right]=s^{2}\label{eq:53-2}
\end{equation}

Recalling that integer squares are congruent to either $0,1,4$ or
$9\left(mod\,12\right)$, replacing the values of $\mu=0,1,2,4,9,11$
in (\ref{eq:53-2}) and reducing $\left(mod\,12\right)$ yield:

(i) for $\mu=0$, $\left(2m\left(6a^{2}-6a+1\right)-s^{2}\right)\equiv0\left(mod\,12\right)$. 

As $\forall a$, $\left(6a^{2}-6a\right)\equiv0\left(mod\,12\right)$,
it yields $\left(2m-s^{2}\right)\equiv0\left(mod\,12\right)$ $\Rightarrow$
$s\equiv0\left(mod\,6\right)$ for $m\equiv0\left(mod\,6\right)$
and $s\equiv2$ or $4\left(mod\,6\right)$ for $m\equiv2\left(mod\,6\right)$. 

Therefore, $M\equiv0\left(mod\,72\right)$ with $s\equiv0\left(mod\,6\right)$
or $M\equiv24\left(mod\,72\right)$ with $s\equiv2$ or $4\left(mod\,6\right)$
and $a$ can take any value.

(ii) for $\mu=1$, $\left(a^{2}+2m-s^{2}\right)\equiv0\left(mod\,12\right)$. 

For $a^{2}\equiv\left\{ 0,1,4,9\right\} \left(mod\,12\right)$, $2m\equiv\left\{ \left(0\,\textnormal{or}\,4\right),\left(0\,\textnormal{or}\,8\right),\left(0\,\textnormal{or}\,8\right),\left(0\,\textnormal{or}\,4\right)\right\} \left(mod\,12\right)$
respectively for $s^{2}\equiv\left\{ \left(0\,\textnormal{or}\,4\right),\left(1\,\textnormal{or}\,9\right),\left(4\,\textnormal{or}\,0\right),\left(9\,\textnormal{or}\,1\right)\right\} \left(mod\,12\right)$,
yielding 

$m\equiv0\left(mod\,2\right)$ and $M\equiv1\left(mod\,24\right)$.
Furthermore, 

- if $m\equiv0\left(mod\,6\right)$, $a$ and $s$ can take any values; 

- if $m\equiv2\left(mod\,6\right)$, either $a\equiv0\left(mod\,6\right)$
and $s\equiv2$ or $4\left(mod\,6\right)$, or $a\equiv3\left(mod\,6\right)$
and $s\equiv1$ or $5\left(mod\,6\right)$; and 

- if $m\equiv4\left(mod\,6\right)$, either $a\equiv1\left(mod\,2\right)$
and $s\equiv3\left(mod\,6\right)$, or $a\equiv0\left(mod\,2\right)$
and $s\equiv0\left(mod\,6\right)$.

(iii) for $\mu=2$, $\left(2\left(a^{2}+a\right)+2m+1-s^{2}\right)\equiv0\left(mod\,12\right)$. 

For $\left(2\left(a^{2}+a\right)+1\right)\equiv\left\{ 1,5\right\} \left(mod\,12\right)$,
$2m\equiv\left\{ \left(0\,\textnormal{or}\,8\right),\left(4\,\textnormal{or}\,8\right)\right\} \left(mod\,12\right)$
respectively for $s^{2}\equiv\left\{ \left(1\,\textnormal{or}\,9\right),\left(9\,\textnormal{or}\,1\right)\right\} \left(mod\,12\right)$,
yielding $m\equiv0\left(mod\,2\right)$ and $M\equiv2\left(mod\,24\right)$.
Furthermore, 

- if $m\equiv0\left(mod\,6\right)$, $a\equiv0$ or $2\left(mod\,3\right)$
and $s\equiv1$ or $5\left(mod\,6\right)$; 

- if $m\equiv2\left(mod\,6\right)$, $a\equiv1\left(mod\,3\right)$
and $s\equiv3\left(mod\,6\right)$; and 

- if $m\equiv4\left(mod\,6\right)$, $a$ can take any value and $s\equiv1\left(mod\,2\right)$.

(iv) for $\mu=4$, $\left(2\left(2a^{2}+1\right)+2m-s^{2}\right)\equiv0\left(mod\,12\right)$. 

For $\left(2\left(2a^{2}+1\right)\right)\equiv\left\{ 2,6\right\} \left(mod\,12\right)$,
$2m\equiv\left\{ \left(2\,\textnormal{or}\,10\right),\left(6\,\textnormal{or}\,10\right)\right\} \left(mod\,12\right)$
respectively for $s^{2}\equiv\left\{ \left(4\,\textnormal{or}\,0\right),\left(0\,\textnormal{or}\,4\right)\right\} \left(mod\,12\right)$,
yielding $m\equiv1\left(mod\,2\right)$ and $M\equiv16\left(mod\,24\right)$.
Furthermore, 

- if $m\equiv1\left(mod\,6\right)$, $a\equiv0\left(mod\,3\right)$
and $s\equiv2$ or $4\left(mod\,6\right)$; 

- if $m\equiv3\left(mod\,6\right)$, $a\equiv1$ or $2\left(mod\,3\right)$
and $s\equiv0\left(mod\,6\right)$; and 

- if $m\equiv5\left(mod\,6\right)$, $a$ can take any value and $s\equiv0\left(mod\,2\right)$.

(v) for $\mu=9$, $\left(9a^{2}+2m-s^{2}\right)\equiv0\left(mod\,12\right)$.

For $\left(9a^{2}\right)\equiv\left\{ 0,9\right\} \left(mod\,12\right)$,
$2m\equiv\left\{ \left(0\,\textnormal{or}\,4\right),\left(0\,\textnormal{or}\,4\right)\right\} \left(mod\,12\right)$
respectively for $s^{2}\equiv\left\{ \left(0\,\textnormal{or}\,4\right),\left(9\,\textnormal{or}\,1\right)\right\} \left(mod\,12\right)$,
yielding $m\equiv0$ or $2\left(mod\,6\right)$ and $M\equiv9$ or
$33\left(mod\,72\right)$. Furthermore,

- if $m\equiv0\left(mod\,6\right)$, either $a\equiv0\left(mod\,2\right)$
and $s\equiv0\left(mod\,6\right)$, or $a\equiv1\left(mod\,2\right)$
and $s\equiv3\left(mod\,6\right)$; and

- if $m\equiv2\left(mod\,6\right)$, either $a\equiv0\left(mod\,2\right)$
and $s\equiv2$ or $4\left(mod\,6\right)$, or $a\equiv1\left(mod\,2\right)$
and $s\equiv1$ or $5\left(mod\,6\right)$.

(vi) for $\mu=11$, $\left(11a^{2}+2a+1+2m-s^{2}\right)\equiv0\left(mod\,12\right)$. 

For $\left(11a^{2}+2a+1\right)\equiv\left\{ 1,2,5,10\right\} \left(mod\,12\right)$,
$2m\equiv\left\{ \left(0\,\textnormal{or}\,8\right),\left(2\,\textnormal{or}\,10\right),\left(4\,\textnormal{or}\,8\right),\right.$
$\left.\left(2\,\textnormal{or}\,6\right)\right\} \left(mod\,12\right)$
respectively for $s^{2}\equiv\left\{ \left(1\,\textnormal{or}\,9\right),\left(4\,\textnormal{or}\,0\right),\left(9\,\textnormal{or}\,1\right),\left(0\,\textnormal{or}\,4\right)\right\} \left(mod\,12\right)$
yielding 

- if $m\equiv0\left(mod\,6\right)$, $a\equiv0$ or $2\left(mod\,6\right)$
and $s\equiv1$ or $5\left(mod\,6\right)$; 

- if $m\equiv1\left(mod\,6\right)$, either $a\equiv1\left(mod\,6\right)$
and $s\equiv2$ or $4\left(mod\,6\right)$, or $a\equiv3$ or $5\left(mod\,6\right)$
and $s\equiv0\left(mod\,6\right)$; 

- if $m\equiv2\left(mod\,6\right)$, $a\equiv4\left(mod\,6\right)$
and $s\equiv3\left(mod\,6\right)$; 

- if $m\equiv3\left(mod\,6\right)$, $a\equiv3$ or $5\left(mod\,6\right)$
and $s\equiv2$ or $4\left(mod\,6\right)$; 

- if $m\equiv4\left(mod\,6\right)$, either $a\equiv0$ or $2\left(mod\,6\right)$
and $s\equiv3\left(mod\,6\right)$, or $a\equiv4\left(mod\,6\right)$
and $s\equiv1$ or $5\left(mod\,6\right)$; and 

- if $m\equiv5\left(mod\,6\right)$, $a\equiv1\left(mod\,6\right)$
and $s\equiv0\left(mod\,6\right)$.

Therefore, the congruences of Table 6 hold.
\end{proof}
Additional necessary conditions can be found using Beeckmans' necessary
conditions and are given in \cite{key-4}. Theorem 5 yields also that
$M$ can only be congruent to $0,1,2,9,11,16,23,24,25,26,33,35,40,47,49,50,59,64$
or $71\left(mod\,72\right)$.

The values of $M$ yielding solutions to (\ref{eq:53-3}) are given
in \cite{key-1-14}.

\section{Case of $M$ being square}

An interesting case occurs when $M$ is itself a squared integer as
shown in the following theorem.
\begin{thm}
For $M>1\in\mathbb{Z}^{+}$, $n\in\mathbb{Z}$, if $M$ is a square
integer, then there exist $M$ satisfying $M\equiv1\left(mod\,24\right)$
such as the sums of $M$ consecutive squared integers $\left(a+i\right)^{2}$
equal integer squares $s^{2}$; furthermore $M=\left(6n-1\right)^{2}$,
i.e $\left(M-1\right)/24$ are all generalized pentagonal numbers
$n\left(3n-1\right)/2$.\end{thm}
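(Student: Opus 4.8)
The plan is to split the theorem into three parts: (i) a square $M$ for which~(\ref{eq:53-3}) is solvable must satisfy $M\equiv1\left(mod\,24\right)$; (ii) every such $M$ has the form $\left(6n-1\right)^{2}$, so that $\left(M-1\right)/24=n\left(3n-1\right)/2$ is a generalized pentagonal number; and (iii) such $M$ genuinely occur. For~(i) I would write $M=t^{2}$ with $t\in\mathbb{Z}^{+}$, $t>1$, and argue on parities of prime-power exponents: if $t$ were even, the exact power of $2$ dividing $M$ would be even and $\geq2$, contradicting Beeckmans' condition (C1.1), so $t$ is odd and $t^{2}\equiv1\left(mod\,8\right)$; if $3\mid t$, the exact power of $3$ dividing $M$ would be even and $\geq2$, contradicting (C1.2), so $3\nmid t$ and $t^{2}\equiv1\left(mod\,3\right)$; combining the two congruences gives $M=t^{2}\equiv1\left(mod\,24\right)$. (Alternatively one can observe that, of the residues $0,1,2,4,9,11\left(mod\,12\right)$ left open by the earlier theorems, an odd square coprime to $3$ can only be $\equiv1\left(mod\,12\right)$, and the preceding theorem sharpens $M\equiv1\left(mod\,12\right)$ to $M\equiv1\left(mod\,24\right)$.)

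For~(ii), since $t$ is odd and coprime to $3$ we have $t\equiv\pm1\left(mod\,6\right)$, so $t=6n-1$ for a unique $n\in\mathbb{Z}$ (take $n\geq1$ when $t\equiv5\left(mod\,6\right)$ and $n\leq0$ when $t\equiv1\left(mod\,6\right)$). Hence $M=\left(6n-1\right)^{2}$ and
\begin{equation}
\frac{M-1}{24}=\frac{\left(6n-1\right)^{2}-1}{24}=\frac{36n^{2}-12n}{24}=\frac{n\left(3n-1\right)}{2},
\end{equation}
which is the $n$-th generalized pentagonal number; as $n$ ranges over $\mathbb{Z}\setminus\left\{ 0\right\} $ (the value $n=0$ giving the excluded $M=1$) these exhaust all generalized pentagonal numbers.

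For~(iii) I would substitute $M=t^{2}$ into~(\ref{eq:53-3}): writing $x=a+\left(M-1\right)/2$, which is an integer because $M$ is odd, it suffices to solve $u^{2}-x^{2}=\left(M^{2}-1\right)/12$ and set $s=tu$. Since $M\equiv1\left(mod\,24\right)$ forces $24\mid M-1$ and $2\mid M+1$, the number $N:=\left(M^{2}-1\right)/12$ is divisible by $4$; choosing $u-x=2$ and $u+x=N/2$ (both even) gives $x=N/4-1$, hence $a=N/4-1-\left(M-1\right)/2=\frac{n\left(3n-1\right)\left(18n^{2}-6n-11\right)}{2}-1$, a positive integer as soon as $\left|n\right|\geq2$. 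Concretely, $n=-1$ yields $M=49\equiv1\left(mod\,24\right)$, $a=25$, $u=51$, $s=357$, i.e.\ $\sum_{i=0}^{48}\left(25+i\right)^{2}=357^{2}$, so square values of $M$ carrying solutions do exist. The bulk of the proof is immediate once (C1.1)--(C1.2) are invoked and the pentagonal identity is computed; the one place needing attention is step~(iii), where I must check both that $\left(M^{2}-1\right)/12$ is not merely an integer but a multiple of $4$ — so that it splits into two factors of equal parity — and that the resulting $a$ is positive, both of which follow from $M\equiv1\left(mod\,24\right)$ together with the evenness of $n\left(3n-1\right)$.
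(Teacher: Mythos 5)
Your proof is correct and, for the congruence and pentagonal-number parts, follows essentially the same route as the paper: invoke (C1.1) and (C1.2) to force the square root $t$ of $M$ to be odd and coprime to $3$, write $t=6n-1$ with $n\in\mathbb{Z}$, and compute $\left(M-1\right)/24=n\left(3n-1\right)/2$. Two differences are worth noting. First, you obtain $M\equiv1\left(mod\,24\right)$ directly from the elementary facts that an odd square is $\equiv1\left(mod\,8\right)$ and a square coprime to $3$ is $\equiv1\left(mod\,3\right)$, whereas the paper first derives $M\equiv1\left(mod\,12\right)$ and then appeals to its preceding theorem for the refinement to modulus $24$; your route is self-contained and shows that appeal is not actually needed. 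Second, and more substantively, you prove the existence clause of the statement: by factoring $s=tu$ and reducing (\ref{eq:53-3}) to $u^{2}-x^{2}=\left(M^{2}-1\right)/12$ with $x=a+\left(M-1\right)/2$, you exhibit an explicit solution (e.g.\ $M=49$, $a=25$, $s=357$), after checking that $\left(M^{2}-1\right)/12$ is divisible by $4$ so that the factors $u\pm x$ can both be taken even. The paper's proof establishes only the necessary congruence and the pentagonal identity, leaving existence to the surrounding discussion and references, so your part (iii) genuinely strengthens the argument. One small remark: your positivity claim ``as soon as $\left|n\right|\geq2$'' is unnecessarily conservative and sits oddly next to your own example $n=-1$; writing $k=n\left(3n-1\right)/2$ one finds $a=\left(12k+1\right)\left(k-1\right)$, which is positive for every $n\notin\left\{ 0,1\right\} $, the only exceptions being $n=0$ (giving $M=1$) and $n=1$ (giving $M=25$ with $a=0$), exactly the two discarded cases the paper mentions after its proof.
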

\begin{proof}
For $M>1,m,m_{1},m_{2}\in\mathbb{Z}^{+}$, $n\in\mathbb{Z}$, let
$M=m^{2}$; then $m\neq0\left(mod\,2\right)$ and $m\neq0\left(mod\,3\right)$
by (C1.1) and (C1.2). Therefore, $m\equiv\pm1\left(mod\,6\right)$
$\Rightarrow$ $m=6m_{1}\pm1$, yielding $M=12m_{1}\left(3m_{1}\pm1\right)+1$
or $M\equiv1\left(mod\,12\right)$. Then, by Theorem 5, $M\equiv1\left(mod\,24\right)$
$\Rightarrow$ $M=24m_{2}+1$, and $24m_{2}+1=\left(6m_{1}\pm1\right)^{2}$,
or $m_{2}=m_{1}\left(3m_{1}\pm1\right)/2$ which is equivalent to
$n\left(3n-1\right)/2$, $\forall n\in\mathbb{Z}$.
\end{proof}
The generalized pentagonal numbers $n\left(3n-1\right)/2$ \cite{key-8,key-6}
take the values 

$0,1,2,5,7,12,15,22,26,35,40,51,57,...$ \cite{key-5}, which then
yields 

$M=1,25,49,121,169,289,361,529,625,841,961,1225,1369,...$ The first
two values, $M=1,25$, should be rejected, the first one because $M$
must be greater than $1$, and the second one because for $M=25$,
one finds the unique solution $a=0$ and $s=70$ and $a$ must be
positive, although it is obviously equivalent to the solution with
$a=1$ and $s=70$ for $M=24$ of Lucas' cannonball problem (see also
\cite{key-7}).

\section{Conclusions}

It was shown that the problem of finding all the integer solutions
of the sum of $M$ consecutive integer squares starting at $a^{2}\geq1$
being equal to a squared integer $s^{2}$ has no solutions if $M$
is congruent to $3,5,6,7,8$ or $10\left(mod\,12\right)$ using Beeckmans
necessary conditions. It was further proven that the problem has integer
solutions if $M$ is congruent to $0,9,24$ or $33\left(mod\,72\right)$;
or to $1$, $2$ or $16\left(mod\,24\right)$; or to $11\left(mod\,12\right)$.
If $M$ is a square itself, then $M$ must be congruent to $1\left(mod\,24\right)$
and $\left(M-1\right)/24$ are all pentagonal numbers, except the
first two.

In a second paper \cite{key-7}, the Diophantine quadratic equation
(\ref{eq:53-3}) in variables $a$ and $s$ with $M$ as a parameter
is solved generally.

\section{Acknowledgment}

The author acknowledges Dr C. Thiel for the help brought throughout
this paper.

\end{document}